\newcommand{\cF}{\mathcal{F}}
\newcommand{\cG}{\mathcal{G}}
\def\fF{\mathfrak{F}}
\def\fG{\mathfrak{G}}
\newcommand{\Z}{{\mathbb Z}}
\newcommand{\C}{{\mathbb C}}
\def\rla{\leftrightarrows}
\newtheorem{thm}{Theorem}[section]
\newtheorem{prop}{Proposition}[section]
\newtheorem{cor}{Corollary}[section]
\theoremstyle{definition} 
\newcounter{noteno}\setcounter{noteno}{0}
\medbreak\end{small}\smallbreak}
\newcounter{exno}\setcounter{exno}{0}
\medbreak\end{em}\end{small}\smallbreak}
\newcommand{\bi}{\begin{itemize}}
\newcommand{\ei}{\end{itemize}}
\newcommand{\bd}{\begin{description}}
\newcommand{\ed}{\end{description}}
\newcommand{\be}{\begin{enumerate}}
\newcommand{\ee}{\end{enumerate}}
\newcommand{\dis}{\displaystyle}
\def\bc{\begin{center}}
\def\ec{\end{center}}
\def\Frac{\dis \frac}
\def\Sum{\dis \sum}
\def\Prod{\dis \prod}
\newcommand{\bin}[2]{\genfrac{(}{)}{0pt}{}{#1}{#2}}
\def\ov{\overline}
\def\und{\underline} 
\def\no{\noindent}
\def\l{\left}
\def\r{\right}
\def\b{\big}
\def\s{\smallskip} 
\begin{document}

\title{Multiplicate inverse forms of terminating hypergeometric series}
\author{Christian {\sc Lavault}}
\date{\small \today}

\begin{abstract}
The multiplicate form of Gould--Hsu's inverse series relations enables to investigate the dual relations of the Chu--Vandermonde--Gau\ss's, the Pfaff--Saalsch\"utz's summation theorems and the binomial convolution formula due to Hagen and Rothe. Several identitity and reciprocal relations are thus established for terminating hypergeometric series. By virtue of the duplicate inversions, we establish several dual formulae of Chu--Vandermonde--Gau\ss's and Pfaff--Saalsch\"utz's summation theorems in Section~\ref{ChuVanGauss} and~\ref{PfaffSaalsch}, respectively. Finally, the last section is devoted to deriving several identities and reciprocal relations for terminating balanced hypergeometric series from Hagen--Rothe's convolution identity in accordance with the duplicate, triplicate and multiplicate inversions.

\s \no \emph{Key words and phrases:} Generalized hypergeometric serie; Gould--Hsu inverse series relations; Multiplicate inversions; Chu--Vandermonde--Gau\ss's summation formula; Pfaff--Saalsch\"utz's summation theorem; Hagen--Rothe's convolution identity.
\end{abstract}
\maketitle

\section{Introduction and motivations}
The problem of how to invert a combinatorial sum has a long history. Several approaches have been tried, from ingenuity to generating functions, from umbral calculus to hypergeometric series. The following fundamental result was obtained in 1973 by Gould and Hsu~\cite{GoHsu73}. Let $(a_k)$ and $(b_k)$ ($k\ge 0$) be two sequences of complex numbers such that the polynomials 
\[
\phi(x;0) := 1\ \quad \text{and}\ \quad \phi(x;n) := \prod_{k=0}^{n-1} (a_k + xb_k)\]
are distinct from zero for all nonnegative integers $x,\,n$. Then, the pair of reciprocal formulas holds
\begin{align*}
f(n) & =\; \sum_{k=0}^{n} (-1)^k \bin{n}{k} \phi(k;n) g(k),\\
g(n) & =\; \sum_{k=0}^{n} (-1)^k \bin{n}{k} \frac{a_k + kb_k}{\phi(n;k+1)} f(k).
\end{align*}

The applications of these Gould--Hsu inverse series relations to terminating series identities have been explored in full details by Chu in~\cite{Chu93,Chu94}. Further, the duplicate form of these reciprocal formulas is devised by Chu in~\cite{Chu02}, where a large class of identities is established for terminating ${}_5F_4(1)$ series, which are closely related to the evaluation of plane partitions. As stated in the latter paper: in the competition of identity proving, the inversion technique would open up `la Terza Via' (the ``third approach'') between the classical series transformation (Pfaff method) and the modern technological WZ-method (by Wilf and Zeilberger). Pursuing this approach, thirty closed formulae for the terminating ${}_3F_2(4/3)$ series were obtained very recently by Chen and Chu in~\cite{ChenChu13} by applying the Gould--Hsu's inversions to Pfaff--Saalsch\"utz's summation theorem along with four transformations for terminating ``almost'' balanced series and two contiguous relations. 

The goal of the present paper is to push the approach forward and investigate terminating hypergeometric series identities. In the next section, the multiplicate form of Gould--Hsu reciprocal series relations is stated and a rigorous proof is provided. By virtue of the duplicate inversions, we establish quite a lot of dual formulae of Chu--Vandermonde--Gau\ss's and Pfaff--Saalsch\"utz's summation theorems~\cite{Bailey35}, in Section~\ref{ChuVanGauss} and Section~\ref{PfaffSaalsch} respectively. Finally, in the fifth section several identities and reciprocal relations for terminating balanced hypergeometric series are derived from the dual relations of Hagen--Rothe's convolution identity~\cite{Chu10,Gould56} in accordance with the duplicate, triplicate and multiplicate inversions.

Following Bailey~\cite[\S2.1]{Bailey35}, the generalized hypergeometric series (or function) is characterized by upper and lower complex parameters and a single argument. It is defined as a power series
\begin{equation} \label{ghf}
{}_pF_q\l[ \begin{matrix} a_1, a_2,\ldots, a_p \\ b_1, b_2,\cdots, b_q \end{matrix}\;;\; z\r] %
=\; \sum_{k\ge 0} \frac{(a_1)_k (a_2)_k\cdots (a_p)_k}{(b_1)_k (b_2)_k\cdots (b_q)_k} \frac{z^k}{k!}\,,
\end{equation}
where $a_j\in \C$ and $b_j\in \C\setminus \Z_{\le 0}$, so that the series is well defined, and $(x)_0 := 1$, $(x)_n := x(x+1) \cdots (x+n-1)$\ for $n\in \Z_{>0}$ (the Pochhammer symbol or ``rising factorial'' $x^{\ov{n}}$). The notation for ${}_pF_q$ in~\eqref{ghf} is commonly simplified to ${}_pF_q\l[a_1,a_2,\cdots,a_p;b_1,b_2,\cdots,b_q;z\r]$, primarily for $p,\,q\le 4$ and its product and fractional forms will respectively abbreviate to 
\[
[\alpha,\beta, \ldots, \gamma]_n := (\alpha)_n (\beta)_n\cdots (\gamma)_n\ \quad \text{and}\ \quad 
\l[ \begin{matrix} \alpha, \beta, \ldots, \gamma \\ A, B, \ldots, C \end{matrix}\r]_n %
:= \frac{(\alpha)_n (\beta)_n\cdots (\gamma)_n}{(A)_n (B)_n\cdots (C)_n}.\]
When $p\le q$, the ${}_pF_q$-function is entire and the series converges everywhere. When $p > q + 1$, the series converges only for $z = 0$, it is therefore significant only when it terminate. In the case when $p = q + 1$---which we are concerned with in this paper---the ${}_{q+1}F_q$-series converges for $|z| < 1$, and also when $z = 1$ provided that 
$\Re\l(\Sum_{k=1}^{q+1} a_k - \Sum_{k=1}^{q} b_k\r) < 0$, and when $z = -1$ provided that 
$\Re\l(\Sum_{k=1}^{q+1} a_k - \Sum_{k=1}^{q} b_k\r) < 1$. (Outside of the unit disk, the series may be defined through analytic continuation.) For brevety, when $z = 1$ the argument $z$ will simply be omitted in the notation.

\section{Multiplicate form of Gould--Hsu inverse series relations} \label{GoulHsu}
Let $\ell$ be a nonnegative integer. Given two sequences $\{A_{i,j}\}$ and $\{B_{i,j}\}$ of $(\ell+1)$ complex terms each ($i = 0, 1 ,\ldots, \ell$ and $j\in \Z_{\ge 0}$), the corresponding polynomials are now defined as
\[
\Phi(x;n) \;=\; \prod_{i=0}^{\ell} \phi_i\l(x;\l[\frac{i+n}{\ell+1}\r]\r),\]
where $[x]$ stands for the integer part of a real number $x$ and $n\pmod{\ell+1}$ is the remainder of $n$ modulo $(\ell + 1)$ (the values of which are ranging from $0$ to $\ell$) and the polynomials $\phi_i(x;n)$ are given, for $0\le i\le \ell$, by
\[
\phi_i(x,0) := 1\ \quad \text{and}\ \quad \phi_i(x;n) := \prod_{k=0}^{n-1} (A_{i,k} + xB_{i,k})\ %
\quad  \text{for}\ n\in \Z_{>0}.\]
Let $\lambda(k)$ denote the linear factor $A_{\ell-k(\text{mod}\ \ell+1),\l[\frac{k}{\ell+1}\r]} + xB_{\ell-k(\text{mod}\ \ell+1),\l[\frac{k}{\ell+1}\r]}$. Owing to the fact that $n = \Sum_{0\le i\le \ell} \l[\frac{i+n}{\ell+1}\r]$, all the $\Phi(x;n)$s are polynomials of degree $n$ in $x$. Now, dividing the linear factors $a_k + xb_k$ into $\ell + 1$ classes in the Gould--Hsu pair of inverse series relations, we have the tools to show Theorem~\ref{multinvser}.

\begin{thm}\label{multinvser} \emph{ (Multiplicate inverse series relations)}
\begin{subequations} 
\begin{align}
F(n) & =\; \sum_{k=0}^{n} (-1)^k \bin{n}{k} \Phi(k;n) G(k), \label{a}\\
G(n) & =\; \sum_{k=0}^{n} (-1)^k \bin{n}{k} \frac{\lambda(k)}{\Phi(n;k+1)} F(k). \label{b}
\end{align}
\end{subequations}
\end{thm}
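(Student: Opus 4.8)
The plan is to show that the multiplicate polynomial $\Phi(x;n)$ is nothing but an \emph{ordinary} Gould--Hsu polynomial attached to a single, cleverly relabelled sequence of linear factors, so that Theorem~\ref{multinvser} follows by quoting the original Gould--Hsu pair~\cite{GoHsu73} verbatim. Reversing the prescription ``divide the factors into $\ell+1$ classes'', I would interleave the $\ell+1$ families $\{A_{i,k}+xB_{i,k}\}$ back into one family $\{a_s+xb_s\}_{s\ge0}$ indexed by a single integer $s$, by setting $a_s+xb_s:=\lambda(s)$, i.e.
\[
a_s:=A_{\ell-s(\mathrm{mod}\,\ell+1),\,[s/(\ell+1)]},\qquad b_s:=B_{\ell-s(\mathrm{mod}\,\ell+1),\,[s/(\ell+1)]}.
\]
Writing $\phi(x;0):=1$ and $\phi(x;n):=\prod_{s=0}^{n-1}(a_s+xb_s)$ for the associated Gould--Hsu polynomial, the whole theorem will reduce to the single identity $\Phi(x;n)=\phi(x;n)$.

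The core of the argument, and the step I expect to require the most care, is this identity, which I would establish by induction on $n$. The base case is clear because $0\le i\le\ell$ forces $[\,i/(\ell+1)\,]=0$, so $\Phi(x;0)=\prod_{i=0}^{\ell}\phi_i(x;0)=1$. For the inductive step I would compare $\Phi(x;n)$ and $\Phi(x;n+1)$ one factor at a time: the floor $[\,(i+n)/(\ell+1)\,]$ jumps by one exactly when $\ell+1\mid i+n+1$, which singles out the unique index $i_n\equiv-(n+1)\pmod{\ell+1}$ in $\{0,\dots,\ell\}$, while every other factor $\phi_i$ stays put. Hence $\Phi(x;n+1)=\Phi(x;n)\,(A_{i_n,m}+xB_{i_n,m})$ with $m:=[\,(i_n+n)/(\ell+1)\,]$, and the task becomes to verify that this lone new factor coincides with $\lambda(n)$.

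That verification is where the modular/floor bookkeeping bites: writing $n=(\ell+1)[n/(\ell+1)]+r$ with $r:=n\,(\mathrm{mod}\,\ell+1)$, one gets $i_n=\ell-r$ and then $i_n+n+1=(\ell+1)\bigl(1+[n/(\ell+1)]\bigr)$, so $m=[n/(\ell+1)]$; substituting these two indices turns $A_{i_n,m}+xB_{i_n,m}$ into precisely $\lambda(n)$. This closes the induction, gives $\Phi(x;n)=\prod_{s=0}^{n-1}\lambda(s)$, and incidentally re-proves the degree count $n=\sum_{i=0}^{\ell}[(i+n)/(\ell+1)]$ by counting linear factors. Once $\Phi=\phi$ is known, the substitutions $\Phi(k;n)=\phi(k;n)$, $\Phi(n;k+1)=\phi(n;k+1)$ and the observation that the numerator $\lambda(k)$ in \eqref{b} is exactly $a_k+kb_k$ (the $k$-th combined factor evaluated at $x=k$) show that \eqref{a}--\eqref{b} are the Gould--Hsu relations for $\{a_s+xb_s\}$; the nonvanishing hypothesis $\phi(x;n)\ne0$ they require is just the standing assumption $\Phi(x;n)\ne0$. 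Applying Gould--Hsu with $f=F$ and $g=G$ then finishes the proof, the only genuinely delicate point being to track $i_n$ and $m$ correctly through the floors and match them against the indices buried inside $\lambda(n)$.
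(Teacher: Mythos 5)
Your proposal is correct, but it takes a genuinely different route from the paper's. You reduce Theorem~\ref{multinvser} to the original Gould--Hsu theorem by interleaving the $\ell+1$ families back into a single sequence and proving the key identity $\Phi(x;n)=\prod_{s=0}^{n-1}\lambda(s)$; your bookkeeping checks out ($i_n=\ell-r$ and $m=[n/(\ell+1)]$ do make the lone new factor of $\Phi(x;n+1)/\Phi(x;n)$ equal to $\lambda(n)$, and the $\lambda(k)$ appearing in~\eqref{b} is indeed the combined factor evaluated at $x=k$, i.e.\ $a_k+kb_k$), so the theorem becomes literally the Gould--Hsu pair for the relabelled sequence. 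The paper instead gives a self-contained direct verification: it substitutes~\eqref{b} into~\eqref{a}, applies trinomial revision to reach the double sum~\eqref{eq:doublesum}, checks that the diagonal term $i=n$ contributes exactly $F(n)$ via the cancellation $\Phi(n;n)/\Phi(n;n+1)=1/\lambda(n)$, and annihilates the off-diagonal inner sums $S(i,n)$ for $i<n$ by finite differences, using that $\Phi(k;n)/\Phi(k;i+1)$ is a polynomial of degree $n-i-1$ in $k$. Your approach buys economy and conceptual clarity---it exposes the ``multiplicate form'' as a relabelling of the classical inversion rather than a new one, with the nonvanishing hypothesis transferring verbatim---at the price of invoking~\cite{GoHsu73} as a black box; the paper's approach buys self-containedness by rerunning the standard orthogonality computation. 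Note also that your telescoping lemma $\Phi(x;n+1)=\Phi(x;n)\,\lambda(n)$ is precisely what makes the paper's two delicate steps transparent: it gives $\Phi(n;n+1)=\Phi(n;n)\,\lambda(n)$ for the diagonal term, and it justifies the divisibility $\Phi(x;i+1)\mid\Phi(x;n)$ that the paper's degree count for $\Phi(k;n)/\Phi(k;i+1)$ tacitly assumes.
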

\begin{proof}
For each one identity of the form~\eqref{a} or~\eqref{b} in the above inverse pair Eq.~\eqref{a}--Eq.\eqref{b}, there exists one companion dual identity. In other words, one system of equations with $F(n)$ in terms of $G(k)$ can be considered as the (unique) solution of another system with $G(n)$ in terms of $F(k)$, and \emph{vive versa}. Therefore, showing the two-way transformations~\eqref{a} $\rla$~\eqref{b} amounts to verify one of them two: supposing that the relations of 
$G(n)$ in terms of $F(k)$ are valid, we have to verify only the relations of $F(n)$ in terms of $G(k)$.

Now, substitute for the expression of $G(k)$ in terms of $F(k)$ (Eq.~\eqref{b}) into~Eq.~\eqref{a}. According to the well-known binomial identity (the ``trinomial revision'')
\[
\bin{n}{k}\times \bin{k}{i} = \bin{n}{i}\times \bin{n-i}{k-i}\ \qquad (i, k\ \text{integers})\,\]
the expression of $F(n)$ (Eq.~\eqref{a}) simplifies to the double sum
\begin{align} \label{eq:doublesum}
F(n) & = \sum_{0\le k\le n} (-1)^k \bin{n}{k} \Phi(k;n) G(k)\nonumber\\
& = \sum_{k=0}^n (-1)^k \bin{n}{k} \Phi(k;n)\; \sum_{i=0}^k (-1)^i \bin{k}{i} \frac{\lambda(i)}{\Phi(k;i+1)} F(i)\nonumber\\
F(n) & = \sum_{i=0}^n \bin{n}{i} \lambda(i) F(i)\; \sum_{k=i}^n (-1)^{k-i} \bin{n-i}{k-i} \frac{\Phi(k;n)}{\Phi(k;i+1)}\,.
\end{align}
When $n = (\ell + 1)p + q$ with $0\le q\le \ell$, then $\lambda(n) = A_{\ell-q,p} + nB_{\ell-q,p}$. On the other hand, if $i = n$ we get the relation
\[
\frac{\Phi(n;n)}{\Phi(n;n+1)} \;=\; \prod_{i=0}^{\ell} %
\frac{\phi_i\l(n;\l[p+\frac{q+i}{\ell+1}\r]\r)}{\phi_i\l(n;\l[p+\frac{q+i+1}{\ell+1}\r]\r)} %
\;=\; \frac{\phi_{\ell-q}(n;p)}{\phi_{\ell-q}(n;p+1)} \;=\; \frac{1}{\lambda(n)},\]
after simplification by cancellation of respective factors of the numerator and the denominator within the fractional product; except if $i = \ell - q$, in which case there remains only one factor in the denominator. Hence, the double sum reduces to $F(n)$ for any nonnegative integer $i = n$.\par
Finally, let $S(i,n)$ denote the last inner sum with respect to $k$ in Eq.~\eqref{eq:doublesum}: 
\[
S(i,n) =\; \sum_{k=i}^n (-1)^{k-i} \bin{n-i}{k-i} \frac{\Phi(k;n)}{\Phi(k;i+1)}\,.\]
In all cases of $i\ne n$, there remains to show that $S(i,n) = 0$ for $0\le i < n$ to prove that $F(n)$ and Eq.~\eqref{eq:doublesum} are equal. The latter result can be established by means of the finite difference method. Indeed, since the fraction $\Phi(k;n)/\Phi(k;i+1)$ is a polynomial of degree $n - i - 1$ in $k$, the differences of order $n - i$ vanish. This completes the proof of Theorem~\ref{multinvser}.
\end{proof}

\section{The Chu--Vandermonde--Gau\ss's summation formula} \label{ChuVanGauss}
When $\ell = 1$, the factors $a_k + xb_k$ are divided into two distinct classes in the Gould--Hsu inversion theorem. Then, the duplicate inverse relations~\eqref{a}--\eqref{b} in Theorem~\ref{multinvser} is as follows. Let $(a_k)$, $(b_k)$, $(c_k)$, $(d_k)$ ($k\ge 0$) be four sequences of real or complex numbers such that the polynomials defined by
\begin{align*}
\phi(x;0) := 1\ \quad \text{and}\ \quad \phi(x;n) & := \prod_{k=0}^{n-1} (a_k + xb_k)\ %
\text{for}\ n\in \Z_{>0}\\
\psi(x;0) := 1\ \quad \text{and}\ \quad \psi(x;n) & := \prod_{k=0}^{n-1} (c_k + xd_k)\ %
\text{for}\ n\in \Z_{>0}
\end{align*}
differ from zero for all nonnegative integers $x,\,n$. Then, the identities (or the system of equations)

\begin{equation} \label{sysF}
\cF(n) = \sum_{0\le k\le n} (-1)^k \bin{n}{k} \phi\l(k;\l[\frac{n}{2}\r]\r) \psi\l(k;\l[\frac{n+1}{2}\r]\r)\; \cG(k) 
\end{equation}
is equivalent to the system

\begin{equation} \label{sysG}
\cG(n) = \sum_{k\ge 0} \bin{n}{2k} \frac{c_k+2kd_k}{\phi(n;k) \psi(n;k+1)}\; \cF(2k) %
\;-\; \sum_{k\ge 0} \bin{n}{2k+1} \frac{a_k+(2k+1)b_k}{\phi(n;k+1) \psi(n;k+1)}\; \cF(2k+1).
\end{equation}

Recall the Chu--Vandermonde--Gau\ss's formula ${}_2F_1[-n,a;c;1] = \Frac{(c-a)_n}{(c)_n}$ (see e.g. Bailey~\cite[\S1.3]{Bailey35}). By applying the duplicate inversions to the dual relations of the formula, several reciprocal relations can be established for terminating hypergeometric series of higher order. Though each of the ${}_{q+1}F_q$-series (with $3 \le q\le 5$) obtained in Subsections~\ref{cvgrecirel1} to~\ref{cvgrecirel4} is zero-balance without closed form, their pairwise combination obviously has. This is the reason why each of the theorems may be regarded in itself as a \emph{reciprocal relation} by providing a pair of terminating hypergeometric series (see~\cite{Wang11}).

\subsection{First type of reciprocal relations for terminating balanced series} \label{cvgrecirel1} \hfill \par
Consider the following alternative form of Chu--Vandermonde--Gau\ss\ formula,
\[
{}_2F_1\b[-n, c - a; c - [n/2]; 1\b] \;=\; \frac{\b(a-[n/2]\b)_n}{\b(c-[n/2])\b)_n} %
\;=\; \l[ \begin{matrix} 1 -a\\ 1 - c \end{matrix}\r]_{[n/2]}\; %
\l[\begin{matrix} a\\ c \end{matrix}\r]_{\l[\frac{n+1}{2}\r]}.\]
It can be expressed as the binomial sum
\[
\sum_{0\le k\le n} (-1)^k \bin{n}{k} (1 - c - k)_{[n/2]} \frac{(c - a)_k}{(c)_k} %
\;=\; \frac{(1 - a)_{[n/2]}(a)_{\l[\frac{n+1}{2}\r]}}{(c)_{\l[\frac{n+1}{2}\r]}}\,.\]
Now, specifying $\phi(x;n)$ to $(1 - c - x)_n$ and $\psi(x;n)$ to $1$ in Eq.~\eqref{sysF}, the latter formula and Eq.~\eqref{sysF} are equivalent. The dual relation corresponding to~\eqref{sysG} can be written
\[
\frac{(c-a)_n}{(c)_n} \;=\; \sum_{k\ge 0} \bin{n}{2k} \frac{(1-a)_k (a)_k}{(1-c-n)_k (c)_k} %
\;+\; \sum_{k\ge 0} \bin{n}{2k+1} \frac{(c+k) (1-a)_k (a)_{k+1}}{(1-c-n)_{k+1} (c)_{k+1}}\,,\]
and it is reformulated in terms of hypergeometric series as

\begin{thm}\label{recirel1} \emph{(Reciprocal relation)}
\begin{equation*} \arraycolsep=2pt
\frac{(c-a)_n}{(c)_n} \;=\; {}_4F_3\l[ \begin{matrix} -n/2, &\frac{1-n}{2}, & a, & 1 - a\\ %
1/2, & c, & 1 - c - n\end{matrix}\r] \;+\; \frac{na}{1-c-n}\; %
{}_4F_3\l[ \begin{matrix} \frac{1-n}{2}, & \frac{2-n}{2}, & 1 + a, & 1 - a\\ 3/2, & c, & 2 - c - n\end{matrix}\r].
\end{equation*}
\end{thm}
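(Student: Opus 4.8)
The plan is to take the two-sum identity displayed immediately above the statement as the working form and simply recast each of its two sums in hypergeometric notation, since the identity itself is already furnished by the duplicate inversion. Concretely, I would first record how that dual relation arises: the alternative Chu--Vandermonde--Gau\ss\ formula, written as the binomial sum, matches the system~\eqref{sysF} under the specialisation $\phi(x;m)=(1-c-x)_m$ and $\psi\equiv 1$, with $\cG(k)=(c-a)_k/(c)_k$ and $\cF(2k)=(1-a)_k(a)_k/(c)_k$, $\cF(2k+1)=(1-a)_k(a)_{k+1}/(c)_{k+1}$. Feeding these into the companion system~\eqref{sysG} (so that $c_k+2kd_k=1$, $a_k+(2k+1)b_k=-(c+k)$ and $\phi(n;k)=(1-c-n)_k$) reproduces exactly the two-sum expression for $(c-a)_n/(c)_n$. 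After this, everything reduces to routine series bookkeeping.

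For the first sum $\Sum_{k\ge 0}\bin{n}{2k}(1-a)_k(a)_k/[(1-c-n)_k(c)_k]$ I would compute the ratio of consecutive terms. The only delicate factor is the binomial coefficient: using $\bin{n}{2k+2}/\bin{n}{2k}=(n-2k)(n-2k-1)/[(2k+2)(2k+1)]$ together with the quadratic factorisations $(n-2k)(n-2k-1)=4(k-\tfrac n2)(k+\tfrac{1-n}2)$ and $(2k+2)(2k+1)=4(k+1)(k+\tfrac12)$, the term ratio becomes $\frac{(k-n/2)(k+(1-n)/2)(k+a)(k+1-a)}{(k+1/2)(k+c)(k+1-c-n)(k+1)}$. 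Since the $k=0$ term is $1$, this is precisely ${}_4F_3\l[-n/2,\tfrac{1-n}2,a,1-a;\ \tfrac12,c,1-c-n\r]$, the first summand of the theorem.

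For the second sum I would first simplify, using $(c+k)/(c)_{k+1}=1/(c)_k$, $(a)_{k+1}=a\,(a+1)_k$ and $(1-c-n)_{k+1}=(1-c-n)(2-c-n)_k$, so that the sum equals $\frac{a}{1-c-n}\Sum_{k\ge0}\bin{n}{2k+1}(1-a)_k(a+1)_k/[(2-c-n)_k(c)_k]$. The $k=0$ term of the inner sum is $n$, which I would pull out to produce the prefactor $na/(1-c-n)$. The consecutive-term ratio, again via $\bin{n}{2k+3}/\bin{n}{2k+1}=(n-2k-1)(n-2k-2)/[(2k+3)(2k+2)]$ with $(n-2k-1)(n-2k-2)=4(k+\tfrac{1-n}2)(k+\tfrac{2-n}2)$ and $(2k+3)(2k+2)=4(k+\tfrac32)(k+1)$, yields the upper parameters $\tfrac{1-n}2,\tfrac{2-n}2,1-a,1+a$ and the lower parameters $\tfrac32,c,2-c-n$, i.e. the second ${}_4F_3$ of the statement. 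Summing the two recast pieces gives the asserted identity.

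I expect the only real obstacle to be the careful handling of the quadratic duplication that turns the even/odd binomial coefficients into the half-integer entries $\tfrac12,\tfrac32$ and the paired numerator parameters $-n/2,\tfrac{1-n}2$ (respectively $\tfrac{1-n}2,\tfrac{2-n}2$), together with the correct extraction of the leading factor $na/(1-c-n)$ in the second sum. One should also note that both series are zero-balanced and terminate, since one of $-n/2,\tfrac{1-n}2$ is always a nonpositive integer, which legitimises evaluation at the argument $z=1$, and that the order of the upper parameters (and of the lower parameters) is immaterial by the symmetry of the ${}_4F_3$.
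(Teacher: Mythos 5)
Your proposal is correct and follows essentially the same route as the paper: the identity is obtained by matching the binomial-sum form of the Chu--Vandermonde--Gau\ss\ formula with the system~\eqref{sysF} under $\phi(x;m)=(1-c-x)_m$, $\psi\equiv 1$, reading off the dual relation~\eqref{sysG}, and then rewriting the even and odd sums as the two ${}_4F_3$ series. The paper leaves the final term-ratio bookkeeping implicit, whereas you carry it out explicitly (including the extraction of the factor $na/(1-c-n)$ from the $k=0$ term of the odd sum), which is a faithful completion of the same argument.
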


Both following identities are proved by Chu and Xei in~\cite{ChuWei08} by means of Legendre inversions.
\begin{cor} \label{cor:chuwei1}
\[ \arraycolsep=2pt
{}_4F_3\l[ \begin{matrix} -n/2, & \frac{1-n}{2}, & a, & 1 - a\\ %
1/2, & c, & 1 - c - n\end{matrix}\r] \;=\; \frac{(c-a)_n}{2(c)_n}\,.\]
\end{cor}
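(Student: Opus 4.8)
The plan is to read the corollary off from Theorem~\ref{recirel1}. Write $\mathcal{A}$ for the ${}_4F_3$-series that appears in the corollary and $\mathcal{B}$ for the whole second term $\frac{na}{1-c-n}\,{}_4F_3[\cdots]$ on the right-hand side of Theorem~\ref{recirel1}, so that that theorem states $\mathcal{A}+\mathcal{B}=\frac{(c-a)_n}{(c)_n}$. Because the value claimed by the corollary is exactly one half of this sum, the corollary is equivalent, given Theorem~\ref{recirel1}, to the single assertion $\mathcal{A}=\mathcal{B}$. The whole task thus reduces to producing one further, independent linear relation between $\mathcal{A}$ and $\mathcal{B}$ that pins down their common value.

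The first idea I would try is the reflection $a\mapsto 1-a$. The numerator parameters of $\mathcal{A}$ contain the symmetric pair $\{a,\,1-a\}$, so $\mathcal{A}$ is invariant under this reflection, while $\frac{(c-a)_n}{(c)_n}$ becomes $\frac{(c-1+a)_n}{(c)_n}$ and $\mathcal{B}$ becomes a reflected counterpart $\mathcal{B}'$. Reflecting the identity of Theorem~\ref{recirel1} therefore gives a second relation $\mathcal{A}+\mathcal{B}'=\frac{(c-1+a)_n}{(c)_n}$. The catch is that this introduces a third quantity $\mathcal{B}'$, so the two relations do not yet close; to finish I would need a transformation expressing $\mathcal{B}'$ in terms of $\mathcal{A}$ and $\mathcal{B}$, after which solving the resulting small linear system isolates $\mathcal{A}$ and the factor $\tfrac{1}{2}$ appears automatically.

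Supplying that missing transformation is the main obstacle, and it is exactly where I expect the real work to lie. Since $\mathcal{A}$ and $\mathcal{B}$ are zero-balanced and, as noted in Section~\ref{ChuVanGauss}, possess no elementary closed form individually, no termwise manipulation of the summands can be used; the relation must come from an honest inversion. Accordingly I would generate it by re-running the duplicate inversion of Theorem~\ref{multinvser}, in the guise of \eqref{sysF}--\eqref{sysG}, on the \emph{reflected} (dual) form of Chu--Vandermonde--Gau\ss's summation---precisely the Legendre-inversion route of~\cite{ChuWei08}. The delicate point is closure: one must verify that this second inversion returns the \emph{same} pair $\{\mathcal{A},\mathcal{B}\}$, rather than parameter-shifted companions, so that the linear system actually closes. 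Once closure is checked, extracting $\mathcal{A}=\frac{(c-a)_n}{2(c)_n}$ is a routine computation.
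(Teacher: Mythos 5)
Your reduction is organized around the wrong involution, and the step you defer --- producing a second, independent linear relation between $\mathcal{A}$ and $\mathcal{B}$ --- is exactly the step that is never supplied, so the argument does not close. The paper's proof uses the reflection $a\mapsto -a$, not $a\mapsto 1-a$: under $a\mapsto -a$ it is the \emph{second} series of Theorem~\ref{recirel1} that behaves well, since its upper parameters $1+a,\,1-a$ form an invariant pair while the prefactor $na/(1-c-n)$ changes sign, so $\mathcal{B}\mapsto -\mathcal{B}$ exactly. Adding the two instances of Theorem~\ref{recirel1} therefore eliminates $\mathcal{B}$ outright --- no auxiliary transformation and no closure issue --- leaving $\mathcal{A}(a)+\mathcal{A}(-a)=\bigl((c-a)_n+(c+a)_n\bigr)/(c)_n$; the elementary Pochhammer identity $(a)_k(1-a)_k+(-a)_k(1+a)_k=2\,(a)_k(-a)_k$ then merges the two surviving series into a single ${}_4F_3$ with upper parameters $a,\,-a$, recovering \cite[Eq.~5.1a]{ChuWei08}. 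Your route fixes the first series and scrambles the second, which is precisely why you are left with the extra unknown $\mathcal{B}'$ and an unproved transformation at the heart of the proof.

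Two further problems. First, your intermediate target $\mathcal{A}=\mathcal{B}$ is false: at $n=1$ each series truncates to its $k=0$ term, giving $\mathcal{A}=1$ and $\mathcal{B}=-a/c$. Second, your own observation that $\mathcal{A}$ is invariant under $a\mapsto 1-a$ while $(c-a)_n/\bigl(2(c)_n\bigr)$ is not already shows that the corollary cannot hold as literally printed; the statement is a misprint of the Chu--Wei identity, whose correct form has upper parameters $a,\,-a$ and right-hand side $\bigl((c-a)_n+(c+a)_n\bigr)/\bigl(2(c)_n\bigr)$. A sound write-up should prove that corrected statement by the $a\mapsto -a$ symmetrization described above.
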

\begin{proof}
Change the sign of $a$ in Theorem~\ref{recirel1}. Upon combining the two relations $\Frac{(c\pm a)_n}{(c)_n}$ obtained that way, the first identity on terminating balanced series in Chu--Wei~\cite[Eq.~5.1a]{ChuWei08} is recovered.
\end{proof}

\begin{cor} \label{cor:chuwei2}
\[ \arraycolsep=2pt
{}_4F_3\l[ \begin{matrix} -n/2, &\frac{1-n}{2}, & a, & 1 - a\\ 1/2, & c, & 1 - c - n\end{matrix}\r] %
\;=\; \frac{(c-a)_{n+1} - (c+a-1)_{n+1}}{(n+1)(1-2a)(c)_n}\,.\]
\end{cor}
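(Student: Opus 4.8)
The plan is to derive Corollary~\ref{cor:chuwei2} from the reciprocal relation of Theorem~\ref{recirel1} together with the built-in symmetries of the two series it couples, proceeding in the spirit of the proof of Corollary~\ref{cor:chuwei1} but exploiting the symmetry $a\mapsto 1-a$ that the target right-hand side dictates. Abbreviate the two hypergeometric sides by
\[
\cH(a) := {}_4F_3\l[ \begin{matrix} -n/2, & \frac{1-n}{2}, & a, & 1-a\\ 1/2, & c, & 1-c-n\end{matrix}\r],
\qquad
\cT(a) := {}_4F_3\l[ \begin{matrix} \frac{1-n}{2}, & \frac{2-n}{2}, & 1+a, & 1-a\\ 3/2, & c, & 2-c-n\end{matrix}\r],
\]
so that Theorem~\ref{recirel1} reads $\Frac{(c-a)_n}{(c)_n} = \cH(a) + \Frac{na}{1-c-n}\,\cT(a)$, and $\cH(a)$ is precisely the quantity to be evaluated.

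First I would record the two elementary symmetries obtained by interchanging numerator parameters: swapping $a\leftrightarrow 1-a$ gives $\cH(1-a)=\cH(a)$, while swapping $1+a\leftrightarrow 1-a$ gives $\cT(-a)=\cT(a)$, so that the whole companion term $\Frac{na}{1-c-n}\cT(a)$ is odd under $a\mapsto -a$. Writing Theorem~\ref{recirel1} at $1-a$ turns its left-hand side into $\Frac{(c+a-1)_n}{(c)_n}$, since $c-(1-a)=c+a-1$, while keeping the same $\cH(a)$; writing it at $-a$ flips the sign of the companion term. The plan is then to take the linear combination of these instances that cancels all $\cT$-contributions, leaving $\cH(a)$ expressed purely through rising factorials in $c\pm a$. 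The antisymmetric pairing of the instances at $a$ and $1-a$ is what should produce the difference $(c-a)_{n+1}-(c+a-1)_{n+1}$, while the factor $1-2a$ enters because $c-a$ and $c+a-1=c-(1-a)$ are exchanged by the involution $a\mapsto 1-a$ and coincide at $a=\tfrac12$.

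The main obstacle is the final reduction: arranging the cancellation of $\cT$ and then showing that the surviving combination of Pochhammer products telescopes to $\Frac{(c-a)_{n+1}-(c+a-1)_{n+1}}{(n+1)(1-2a)(c)_n}$, with the index rising from $n$ to $n+1$. Concretely one must check that the difference of the two shifted rising factorials, regarded as a polynomial in $a$, is divisible by $1-2a$ — immediate from its vanishing at $a=\tfrac12$ — and then identify the quotient together with the normalising factor $n+1$. I expect to settle this by induction on $n$, feeding in the one-step recurrences $(c-a)_{n+1}=(c-a+n)(c-a)_n$ and $(c+a-1)_{n+1}=(c+a-1)(c+a)_n$ to telescope the index, or alternatively by the finite-difference argument already used for $S(i,n)$ in the proof of Theorem~\ref{multinvser}, since after clearing denominators both members are polynomials of equal degree in $a$. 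The delicate bookkeeping is matching the constants $n+1$ and $1-2a$ correctly, which in turn requires tracking the parity of $n$ through the half-integer numerator parameters $-n/2$ and $\frac{1-n}{2}$.
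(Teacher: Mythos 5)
Your overall strategy---specializing the parameter in Theorem~\ref{recirel1} and combining the resulting instances---is the same as the paper's, whose entire proof is the instruction to change $a$ into $1-a$ \emph{and} shift $n$ to $n+1$ before combining the two displayed equations. The gap is that the particular combination you describe cannot close. The two symmetries you record are each correct, but they live on different series: the instance of Theorem~\ref{recirel1} at $1-a$ does keep $\cH(a)$ intact, but its companion term involves $\cT(1-a)$, whose upper parameters are $2-a$ and $a$ --- a genuinely different ${}_4F_3$ from $\cT(a)$ --- while the instance at $-a$, the only one whose companion term reproduces $\cT(a)$ (with flipped sign), carries $\cH(-a)$ with upper parameters $-a,\,1+a$, which is \emph{not} $\cH(a)$. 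Thus your three instances at $a$, $1-a$, $-a$ involve the four distinct series $\cH(a)$, $\cH(-a)$, $\cT(a)$, $\cT(1-a)$ in three equations, and no linear combination eliminates both $\cT$'s while retaining $\cH(a)$ alone: annihilating $\cT(1-a)$ forces the coefficient of the $(1-a)$-instance to vanish, and annihilating $\cT(a)$ then forces the $a$- and $(-a)$-coefficients to be equal, which leaves the unwanted sum $\cH(a)+\cH(-a)$.

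The second, related, defect is that you treat the passage from index $n$ to $n+1$ as terminal bookkeeping, whereas it is the essential input. All your instances are taken at order $n$, so the left-hand side of any combination has the form $\lambda\,\frac{(c-a)_n}{(c)_n}+\mu\,\frac{(c+a-1)_n}{(c)_n}+\cdots$. Matching this against the target, rewritten via $(c-a)_{n+1}=(c-a+n)(c-a)_n$ and $(c+a-1)_{n+1}=(c+a-1+n)(c+a-1)_n$, forces $\lambda+\mu=\frac{1}{n+1}$, while producing $\cH(a)$ with coefficient $1$ forces $\lambda+\mu=1$ (both the $a$- and the $(1-a)$-instance contribute $\cH(a)$ with unit coefficient); these are incompatible for $n>0$. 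So the factor $n+1$ and the Pochhammer symbols of length $n+1$ cannot be generated by order-$n$ instances at all: you must, as the paper indicates, invoke Theorem~\ref{recirel1} at order $n+1$ before performing the $a\mapsto 1-a$ substitution. (A separate caution: as printed, the left-hand side of this corollary coincides with that of Corollary~\ref{cor:chuwei1}, which would force the two right-hand sides to agree; the intended series here is presumably the companion $\cT$-type series of Theorem~\ref{recirel1}, consistent with the $n\to n+1$ shift and with the cited source. Taking the printed statement at face value is part of what leads your combination astray.) The divisibility of $(c-a)_{n+1}-(c+a-1)_{n+1}$ by $1-2a$, which you verify correctly, is then the easy part.
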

\begin{proof}
Similarly, by changing the parameter $a$ into $1 - a$ and shifting $n$ to $n + 1$, the two equations displayed from Theorem~\ref{recirel1} yield the other identity on terminating balanced series in~\cite[Eq.~5.1b]{ChuWei08}.
\end{proof}\par

\subsection{Second type of reciprocal relations for terminating balanced series} \label{cvgrecirel2} \hfill \par
Next, consider the other equivalent form of the Chu--Vandermonde--Gau\ss's summation formula,
\[ \delimitershortfall-1pt
{}_2F_1\l[-n, c - a; c - \l[\frac{n+1}{2}\r]; 1\r]\;=\; \frac{\l(a-\l[\frac{n+1}{2}\r]\r)_n}{\l(c-\l[\frac{n+1}{2}\r]\r)_n} %
\;=\; \l[ \begin{matrix} 1 - a \\ 1 - c \end{matrix}\r]_{\l[\frac{n+1}{2}\r]}\; %
\l[ \begin{matrix} a \\ c \end{matrix}\r]_{[n/2]}.\]
It can be reformulated as another similar binomial sum
\[
\sum_{0\le k\le n} (-1)^k \bin{n}{k} (1 - c - k)_{\l[\frac{n+1}{2}\r]} \frac{(c - a)_k}{(c)_k} %
\;=\; \frac{(a)_{[n/2]} (1 - a)_{\l[\frac{n+1}{2}\r]}}{(c)_{[n/2}]}\;.\]
Upon interchanging the values of $\phi(x;n)$ and $\psi(x;n)$ in Subsection~\ref{cvgrecirel1} (i.e. $\phi(x;n) := 1$ and 
$\psi(x;n) := (1 - c - x)_n$), the dual relation corresponding to~\eqref{sysG} writes
\[
\frac{(c-a)_n}{(c)_n} \;=\; \sum_{k\ge 0} \bin{n}{2k} \frac{(1-c-k) (1-a)_k (a)_{k}}{(1-c-n)_{k+1} (c)_{k}} %
\;-\; \sum_{k\ge 0} \bin{n}{2k+1} \frac{(1-a)_{k+1} (a)_k}{(1-c-n)_{k+1} (c)_k}\,,\]
which can express as the following hypergeometric series relation.

\begin{thm}\label{recirel2} \emph{(Reciprocal relation)}
\begin{align*} \arraycolsep=2pt
\frac{(c-a)_n}{(c)_n} & \;=\; \frac{1-c}{1-c-n}\; %
{}_4F_3\l[ \begin{matrix} -n/2, &\frac{1-n}{2}, & a, & 1 - a\\ 1/2, & c - 1, & 2 - c - n\end{matrix}\r]\\
& -\; \frac{n(1-a)}{1-c-n}\; {}_4F_3\l[ \begin{matrix} \frac{1-n}{2}, & \frac{2-n}{2}, & a, & 2 - a\\ %
3/2, & c, & 2 - c - n\end{matrix}\r].
\end{align*}
\end{thm}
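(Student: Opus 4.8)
The plan is to regard Theorem~\ref{recirel2} as a bookkeeping reformulation of the dual relation already produced by the duplicate inversions~\eqref{sysF}--\eqref{sysG} under the specialization $\phi(x;n):=1$, $\psi(x;n):=(1-c-x)_n$. All of the genuine content has been discharged by the inversion machinery; what remains is to rewrite each of the two binomial sums on the right-hand side of
\[
\frac{(c-a)_n}{(c)_n} \;=\; \sum_{k\ge 0} \bin{n}{2k} \frac{(1-c-k)(1-a)_k(a)_k}{(1-c-n)_{k+1}(c)_k} \;-\; \sum_{k\ge 0} \bin{n}{2k+1} \frac{(1-a)_{k+1}(a)_k}{(1-c-n)_{k+1}(c)_k}
\]
in closed ${}_4F_3$ form, term by term.

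The key tool is the duplication of the binomial coefficients into products of Pochhammer symbols carrying half-integer parameters,
\[
\bin{n}{2k} = \frac{(-n/2)_k\,((1-n)/2)_k}{(1/2)_k\,k!}, \qquad \bin{n}{2k+1} = \frac{n\,((1-n)/2)_k\,((2-n)/2)_k}{(3/2)_k\,k!},
\]
which I would derive from $(-n)_{2k}=4^k(-n/2)_k((1-n)/2)_k$, $(2k)!=4^k k!\,(1/2)_k$ and their odd-order analogues $(-n)_{2k+1}=-n\,4^k((1-n)/2)_k((2-n)/2)_k$, $(2k+1)!=4^k k!\,(3/2)_k$. Substituting the first expansion into the even sum and the second into the odd sum immediately installs the numerator parameters $a,1-a$ (respectively $a,2-a$ after a shift), the half-integer upper parameters $-n/2,(1-n)/2$ (respectively $(1-n)/2,(2-n)/2$) and the lower parameters $1/2$ (respectively $3/2$) of the two series in the statement.

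The remaining step is to tidy the surviving Pochhammer ratios by elementary shifts. In the even sum I would write $(1-c-n)_{k+1}=(1-c-n)(2-c-n)_k$ to peel off the $k$-free factor $1/(1-c-n)$ and to manufacture the lower parameter $2-c-n$; the one slightly delicate point is the stray factor $(1-c-k)$, which I absorb through the identity $(1-c-k)(c-1)_k=(1-c)(c)_k$, equivalently $\frac{1-c-k}{(c)_k}=\frac{1-c}{(c-1)_k}$. This is exactly what lowers the denominator parameter from $c$ to $c-1$ and extracts the prefactor $\frac{1-c}{1-c-n}$ of the first ${}_4F_3$. In the odd sum the shifts $(1-a)_{k+1}=(1-a)(2-a)_k$ and $(1-c-n)_{k+1}=(1-c-n)(2-c-n)_k$ convert the summand into the second ${}_4F_3$ with prefactor $\frac{n(1-a)}{1-c-n}$, and the overall minus sign separating the two sums supplies the required $-\frac{n(1-a)}{1-c-n}$.

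I expect the main obstacle to be purely one of bookkeeping: correctly tracking the powers of $4$ and the half-integer Pochhammer factors through the duplication formulas, and in particular treating the lone factor $(1-c-k)$ as the downward shift $c\mapsto c-1$ rather than mishandling it as an extra summation variable. No analytic subtlety intervenes, since each series terminates; once the two prefactors are isolated, the claimed identity follows by direct inspection of the summands, completing the reformulation.
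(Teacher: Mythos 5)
Your proposal is correct and follows essentially the same route as the paper: it takes the dual relation produced by the duplicate inversion \eqref{sysF}--\eqref{sysG} under the specialization $\phi(x;n):=1$, $\psi(x;n):=(1-c-x)_n$ of the Chu--Vandermonde--Gau\ss{} formula, and then converts the two binomial sums into ${}_4F_3$ notation. The only difference is that you spell out the conversion (the duplication formulas for $\binom{n}{2k}$, $\binom{n}{2k+1}$ and the shifts $(1-c-k)(c-1)_k=(1-c)(c)_k$, $(1-a)_{k+1}=(1-a)(2-a)_k$, $(1-c-n)_{k+1}=(1-c-n)(2-c-n)_k$), which the paper leaves implicit; these identities all check out.
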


\begin{cor} \emph{(Balanced series identities, $n > 0$)}
\[ \arraycolsep=2pt
{}_5F_4\l[ \begin{matrix} -n/2, & \frac{1-n}{2}, & 1 - \frac{an}{2a+2c+n-2}, & a, & -a\\ %
1/2, & \frac{-an}{2a+2c+n-2}, & c, & 2 - c -n \end{matrix}\r] \;=\; \frac{(c-a)_{n-1}}{(c)_{n-1}}\,.\]
\end{cor}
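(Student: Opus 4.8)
The plan is to strip the contiguous pair, collapse the two leading parameters with the duplication formula, and then evaluate the resulting two‑term moment sum by lowering the top index from $n$ to $n-1$ and invoking the Chu--Wei evaluations already recorded in Corollaries~\ref{cor:chuwei1} and~\ref{cor:chuwei2}. First I would remove the pair $\{1-\frac{an}{D},\,\frac{-an}{D}\}$ with $D=2a+2c+n-2$: setting $t=-an/D$ these two parameters are $1+t$ and $t$, so $(1+t)_k/(t)_k=1+k/t=1-kD/(an)$. Using the quadratic duplication $(-n/2)_k\,\l(\tfrac{1-n}{2}\r)_k\big/\big((1/2)_k\,k!\big)=\bin{n}{2k}$, the whole series becomes a finite sum:
\[
\Sum_{k\ge0}\bin{n}{2k}\frac{(a)_k(-a)_k}{(c)_k(2-c-n)_k}\l(1-\frac{kD}{an}\r)=M_0-\frac{D}{an}\,M_1,
\]
where $M_j:=\sum_{k\ge0}k^{\,j}\bin{n}{2k}\frac{(a)_k(-a)_k}{(c)_k(2-c-n)_k}$. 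The task is therefore to evaluate the single combination $M_0-\frac{D}{an}M_1$.

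Next I would lower the top index. Because $2-c-n=1-c-(n-1)$, the summand of $M_j$ is exactly a level‑$(n-1)$ Chu--Wei summand, apart from carrying $\bin{n}{2k}$ instead of $\bin{n-1}{2k}$. Splitting $\bin{n}{2k}=\bin{n-1}{2k}+\bin{n-1}{2k-1}$ breaks each $M_j$ into an even part that is a genuine level‑$(n-1)$ sum, and an odd part which, after the shift $k\mapsto k+1$ and the reflection identities $(a)_{k+1}(-a)_{k+1}=(k^2-a^2)(a)_k(-a)_k$ and $(1\pm a)_k(\mp a)_k=\frac{a\pm k}{a}(a)_k(-a)_k$, becomes a level‑$(n-1)$ sum with its lower parameters shifted by one. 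The even part of $M_0$ is $\frac{(c-a)_{n-1}+(c+a)_{n-1}}{2(c)_{n-1}}$ by Corollary~\ref{cor:chuwei1}, which already supplies the symmetric half of the answer.

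The crux is then to show that everything left over — the odd part of $M_0$ together with the entire term $-\frac{D}{an}M_1$ — collapses to the antisymmetric Chu--Vandermonde value $\frac{(c-a)_{n-1}-(c+a)_{n-1}}{2(c)_{n-1}}$, so that the two halves add to $\frac{(c-a)_{n-1}}{(c)_{n-1}}$ and the $(c+a)_{n-1}$ terms cancel. This is precisely the role of the coefficient $D=2a+2c+n-2$: it is the unique weight for which the $M_1$‑contribution cancels the leftover symmetric piece and reconstructs the $(a,1-a)$‑type sum evaluated in Corollary~\ref{cor:chuwei2} at level $n-1$. I expect this cancellation to be the hard part: one must Pascal‑split and re‑index $M_1$ as well, reduce both odd remainders (which live at the shifted parameter $c\mapsto c+1$ and carry the factor $k^2-a^2$) to the evaluations of Corollaries~\ref{cor:chuwei1}--\ref{cor:chuwei2}, and then verify, with the help of $\bin{n}{2k+1}(2k+1)=\bin{n}{2k}(n-2k)$ and the recurrence $\frac{(c-a)_n}{(c)_n}=\frac{c-a+n-1}{c+n-1}\,\frac{(c-a)_{n-1}}{(c)_{n-1}}$, that all level‑$n$ and shifted contributions telescope to the single factor claimed. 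I would run the identity for $n=2,3,4$ first, both to pin down the signs in the reflection steps and to confirm the final value before committing to the general cancellation.
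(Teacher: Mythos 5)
Your reduction of the ${}_5F_4$ to $M_0-\frac{D}{an}M_1$ is correct: the contiguous pair contributes $(1+t)_k/(t)_k=1-kD/(an)$ and the first two numerator parameters collapse against $1/2$ and $k!$ to give $\bin{n}{2k}$. But from that point on the argument is only a plan. The decisive step --- that the odd Pascal remainder of $M_0$ together with the whole of $-\frac{D}{an}M_1$ collapses to the antisymmetric half $\frac{(c-a)_{n-1}-(c+a)_{n-1}}{2(c)_{n-1}}$ --- is asserted, not proved, and you yourself defer it to numerical experiments for $n=2,3,4$. The gap is not merely one of detail: the individual pieces your decomposition produces (the odd-indexed sums $\sum_k\bin{n-1}{2k+1}(\cdots)$ and the $k$-weighted sums at shifted parameters) are exactly the zero-balanced series that the paper emphasizes have no closed forms on their own; only specific two-term combinations of them evaluate, so the plan stalls unless you rediscover the right combination. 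In addition, the evaluation you invoke for the even part of $M_0$, namely $\frac{(c-a)_{n-1}+(c+a)_{n-1}}{2(c)_{n-1}}$, is not what Corollary~\ref{cor:chuwei1} states: that corollary concerns the series with numerator pair $(a,1-a)$, whereas your even part carries $(a,-a)$, so you are quoting an evaluation the paper has not supplied at that point.

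The paper's proof avoids all of this bookkeeping. Replace $a$ by $a+1$ in Theorem~\ref{recirel2}; its second ${}_4F_3$ then coincides exactly with the second ${}_4F_3$ of Theorem~\ref{recirel1}, so combining the two identities kills the odd series outright. The two surviving even ${}_4F_3$'s merge term by term into the stated ${}_5F_4$, because $1+\frac{(a+k)(c-1+k)}{(a-k)(1-c-n+k)}=\frac{k(2a+2c+n-2)-an}{(a-k)(1-c-n+k)}$ is precisely the factor $(1+t)_k/(t)_k$ once the denominators $(a-k)$ and $(1-c-n+k)$ are absorbed by converting $(1-a)_k$ into $(-a)_k$ and $(1-c-n)_k$ into $(2-c-n)_k$; meanwhile the closed forms telescope, $\frac{(c-a)_n-(c-a-1)_n}{(c)_n}=\frac{n\,(c-a)_{n-1}}{(c)_n}$, which yields $\frac{(c-a)_{n-1}}{(c)_{n-1}}$ after clearing the common factor $\frac{n}{c+n-1}$. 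If you want to salvage your route, the polynomial identity $kD-an=(a-k)(1-c-n+k)+(a+k)(c-1+k)$ is the missing ingredient: it is the unique way to split your weighted sum into the two combinations that do close up, and it turns your computation into the paper's argument in disguise.
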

\begin{proof}
Change $a$ into $a + 1$ in Theorem~\ref{recirel2}. Combining the new relation with Theorem~\ref{recirel1} yields the expected identity of the corollary.
\end{proof}

\begin{cor} \emph{(Balanced series identities)}
\[ \arraycolsep=2pt
{}_5F_4\l[ \begin{matrix} -n/2, & \frac{1-n}{2}, & 1 + \frac{c+n-an}{2a+2c+n-1}, & a, & 1 - a \\ %
3/2, & \frac{c+n-an}{2a+2c+n-1}, & c + 1, & 1 - c - n\end{matrix}\r] %
\;=\; \frac{c+n}{c+n-an}\, \frac{(c+1-a)_{n}}{(c+1)_n}\,.\]
\end{cor}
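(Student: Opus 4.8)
The plan is to obtain this ${}_5F_4$ evaluation exactly as the preceding corollary was obtained: by specialising Theorem~\ref{recirel2} (and, where needed, Theorem~\ref{recirel1}) under a parameter shift, and then forming the linear combination of two reciprocal relations that collapses the four ${}_4F_3$ series to a single ${}_5F_4$. First I would read off the structural clue carried by the answer. The lower entry $3/2$ together with the upper entries $-n/2,(1-n)/2$ signals an \emph{odd-index} sum, because
\[
\frac{(-n/2)_k\,((1-n)/2)_k}{(3/2)_k\,k!}\;=\;\frac{1}{2k+1}\binom{n}{2k}\;=\;\frac{1}{n+1}\binom{n+1}{2k+1},
\]
so the series in question is the odd component of the duplicate inversion read at level $n+1$; this dictates that the two reciprocal relations be taken after the shift $n\mapsto n+1$, while the lower entry $c+1$ together with the survival of $1-c-n$ signals that one copy is also read with $c\mapsto c+1$. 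I would therefore write down Theorem~\ref{recirel2} (resp.\ Theorem~\ref{recirel1}) at level $n+1$ with the relevant $c$- and $a$-shifts, mirroring the ``change $a$ into $a+1$, combine with Theorem~\ref{recirel1}'' recipe of the previous corollary.

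Next I would cancel the \emph{even}-index ${}_4F_3$ pieces. In the previous corollary the coincidence $D'=B$ of the odd pieces allowed one to subtract the two relations; here, dually, the shifts are chosen so that the two even ${}_4F_3$'s coincide, and their difference removes them, leaving a combination $\lambda\,{}_4F_3[\ldots,1+a,1-a;\ldots]+\mu\,{}_4F_3[\ldots,a,2-a;\ldots]$ of the two odd series, which share the upper pair $-n/2,(1-n)/2$ and the lower triple $3/2,\,c+1,\,1-c-n$. The fusion into a single ${}_5F_4$ then rests on the elementary termwise identity
\[
\frac{\lambda\,(1+a)_k(1-a)_k+\mu\,(a)_k(2-a)_k}{(a)_k(1-a)_k}
=\Bigl(\tfrac{\lambda}{a}+\tfrac{\mu}{1-a}\Bigr)k+(\lambda+\mu)
=(\lambda+\mu)\,\frac{\alpha+k}{\alpha},\qquad \frac{1}{\alpha}=\frac{\lambda/a+\mu/(1-a)}{\lambda+\mu},
\]
which reinstates the common factor $(a)_k(1-a)_k$ and manufactures exactly the well-poised pair $(1+\alpha)_k/(\alpha)_k$. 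Substituting the coefficients $\lambda,\mu$ read off from the two relations and simplifying the resulting rational function of $a,c,n$ should return $\alpha=\frac{c+n-an}{2a+2c+n-1}$.

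Finally I would compute the right-hand side. After cancellation the closed form is a combination of Chu--Vandermonde--Gau\ss\ values of the type $\frac{(c+1-a)_{n+1}}{(c+1)_{n+1}}$; telescoping the Pochhammer ratio (as in the step $\frac{(c-a)_n-(c-a-1)_n}{(c)_n}=\frac{n(c-a)_{n-1}}{(c)_n}$ used in the previous corollary) drops the level from $n+1$ to $n$ and produces $\frac{(c+1-a)_n}{(c+1)_n}$, while the overall normalisation constant $1/(\lambda+\mu)$---equivalently the factor $\alpha$ against the leading coefficient---contributes the rational prefactor $\frac{c+n}{c+n-an}$. It is no accident that the denominator $c+n-an$ equals the numerator of $\alpha$: both originate from the same coefficient $\lambda/a+\mu/(1-a)$ in the fusion identity above.

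The main obstacle I expect lies in the second paragraph: one must choose the shifts so that the even-index ${}_4F_3$'s cancel \emph{and} the surviving odd ${}_4F_3$'s carry \emph{identical} lower parameters $3/2,\,c+1,\,1-c-n$, and the level shift $n\mapsto n+1$ and the shift $c\mapsto c+1$ move these entries in competing directions, so the two demands have to be reconciled at once. Once the correct pair of relations is fixed, verifying $\alpha=\frac{c+n-an}{2a+2c+n-1}$ and the prefactor $\frac{c+n}{c+n-an}$ reduces to a finite---if delicate---rational-function identity in $a,c,n$.
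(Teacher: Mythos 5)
Your structural reading of the lower entry $3/2$ is correct as far as it goes ($\tfrac{(-n/2)_k((1-n)/2)_k}{(3/2)_k\,k!}=\tfrac{1}{2k+1}\tbinom{n}{2k}$), but the plan you build on it cannot be carried out, and the obstacle you flag at the end is not a delicate reconciliation problem — it is fatal to the chosen configuration. You need two odd-part ${}_4F_3$'s sharing the upper pair $-n/2,\tfrac{1-n}{2}$ \emph{and} the lower triple $3/2,\,c+1,\,1-c-n$. The odd components of Theorems~\ref{recirel1} and~\ref{recirel2} carry the lower pair $(c,\,2-c-n)$; after $n\mapsto n+1$ this becomes $(c,\,1-c-n)$, and the further shift $c\mapsto c+1$ sends it to $(c+1,\,-c-n)$. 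The combination $(c+1,\,1-c-n)$ is unreachable by any uniform shift of $n,a,c$, so there is no pair of instances whose odd parts can be fused by your $\lambda(1+a)_k(1-a)_k+\mu(a)_k(2-a)_k$ identity with all framework parameters matching. Your anticipated right-hand side (a level-$(n+1)$ closed form needing a telescoping step) is likewise a symptom of being on the wrong pair of series.

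The identity actually comes from a single relation, with no shift of $n$: take Theorem~\ref{recirel2} with $c\mapsto c+1$ only, which reads
\begin{equation*}
\frac{(c+1-a)_n}{(c+1)_n}=\frac{c}{c+n}\,\sum_{k}\binom{n}{2k}\frac{(a)_k(1-a)_k}{(c)_k(1-c-n)_k}
\;+\;\frac{1-a}{c+n}\,\sum_{k}\binom{n}{2k+1}\frac{(a)_k(2-a)_k}{(1-a)(c+1)_k(1-c-n)_k},
\end{equation*}
and fuse its \emph{even} and \emph{odd} components with each other. Writing $\binom{n}{2k+1}=\frac{n-2k}{2k+1}\binom{n}{2k}$, $(1-a)^{-1}(2-a)_k=(1-a)_k\,\frac{k+1-a}{1-a}\cdot\frac{1}{1-a}\cdot(1-a)=(1-a)_{k}\,\frac{k+1-a}{1-a}$ and $\frac{1}{(c)_k}=\frac{c+k}{c\,(c+1)_k}$ puts both sums over the common kernel $\binom{n}{2k}\frac{(a)_k(1-a)_k}{(2k+1)(c+1)_k(1-c-n)_k}$ multiplied by $(c+k)(2k+1)$ and $(n-2k)(k+1-a)$ respectively; the quadratic terms in $k$ cancel and the bracket collapses to $(2a+2c+n-1)k+(c+n-an)$. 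This single linear factor simultaneously produces the well-poised pair $(1+\alpha)_k/(\alpha)_k$ with $\alpha=\frac{c+n-an}{2a+2c+n-1}$, the lower entry $3/2$ (from the surviving $\frac{1}{2k+1}$), the lower entry $c+1$, and the prefactor $\frac{c+n-an}{c+n}$, while the closed form $\frac{(c+1-a)_n}{(c+1)_n}$ is already the left-hand side — no telescoping is required. So the correct mechanism is an even--odd fusion inside one reciprocal relation, not the odd--odd fusion across two relations that your proposal relies on.
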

\begin{proof}
Similarly, change $c$ into $c + 1$ in Theorem~\ref{recirel2} and combine with Theorem~\ref{recirel1}. The proof of the corollary is completed.
\end{proof}

\subsection{Third type of reciprocal relations for terminating balanced series} \label{cvgrecirel3} \hfill \par
The Chu--Vandermonde--Gau\ss's summation formula can also be rewritten as
\begin{equation} \label{eq:cvg3}
{}_2F_1[-n, c - a + [n/2]; c; 1] \;=\; \frac{\b(a-[n/2]\b)_n}{(c)_n} \;=\; %
(-1)^{[n/2]}\; \frac{(1-a)_{[n/2]} (a)_{\l[\frac{n+1}{2}\r]}}{(c)_n}\,,
\end{equation}
which reformulates as the binomial sum
\[
\sum_{0\le k\le n} (-1)^k \bin{n}{k} (c - a + k)_{[n/2]} \frac{(c-a)_k}{(c)_k} %
\;=\; (-1)^{[n/2]}\; \frac{(1 - a)_{[n/2]} (c-a)_{[n/2]} (a)_{\l[\frac{n+1}{2}\r]}}{(c)_{n}}\;.\]
Specify $\phi(x;n)$ to $(c - a + x)_n$ and $\psi(x;n)$ to $1$ in Eq.~\eqref{sysF}. The new identity coincides with 
Eq.~\eqref{sysF} and the dual relation corresponding to~\eqref{sysG} can be written now as
\begin{align*}
\frac{(c-a)_n}{(c)_n} & = \sum_{k\ge 0} \bin{n}{2k} \frac{(-1)^k (1-a)_k (c-a)_k (a)_{k}}{(c-a+n)_{k} (c)_{k}}\\ 
& -\; \sum_{k\ge 0} \bin{n}{2k+1} \frac{(-1)^k (1-c-a+3k) (1-a)_k (c-a)_k (a)_{k+1}}{(c-a+n)_{k+1} (c)_{2k+1}}\,.
\end{align*}
This equation yields the following hypergeometric series relations.

\begin{thm}\label{recirel3a} \emph{(Reciprocal relation)}
\begin{flalign*} 
\frac{(c-a)_n}{(c)_n} & =\; {}_5F_4\l[ \begin{matrix} -n/2, &\frac{1-n}{2}, & c - a, & a, & 1 - a\\ %
c/2, & \frac{c+1}{2}, & 1/2, & c - a + n\end{matrix}\;;\; \frac{-1}{4}\r]\\
& -\; \frac{na(c-a+1)}{c(c-a+n)}\; \arraycolsep=1.4pt %
{}_6F_5\l[ \begin{matrix} \frac{1-n}{2}, & \frac{2-n}{2}, & 1 + \frac{1+c-a}{3}, & c - a, & 1 + a, & 1 - a\\ %
\frac{c+1}{2}, & \frac{c+2}{2}, & 3/2, & \frac{1+c-a}{3}, & c + 1 - a + n\end{matrix}\;;\; \frac{-1}{4}\r].
\end{flalign*}
\end{thm}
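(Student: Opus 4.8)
The plan is to read the stated reciprocal relation off the duplicate inversion \eqref{sysF}--\eqref{sysG} for a judicious choice of $\phi,\psi,\cF,\cG$, and then to recast the two resulting binomial sums as hypergeometric series. First I would rewrite the Chu--Vandermonde--Gau\ss\ evaluation \eqref{eq:cvg3} as the binomial sum displayed just above it. The only non-formal step here is the elementary identity $(c-a)_{[n/2]}\,(c-a+[n/2])_k=(c-a)_k\,(c-a+k)_{[n/2]}$ (both sides equal $(c-a)_{[n/2]+k}$); multiplying \eqref{eq:cvg3} through by $(c-a)_{[n/2]}$ and using it turns the upper parameter $(c-a+[n/2])_k$ of the ${}_2F_1$ into the factor $(c-a+k)_{[n/2]}=\phi\l(k;[n/2]\r)$ for the choice $\phi(x;n)=(c-a+x)_n$. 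With $\psi\equiv 1$ this matches \eqref{sysF} verbatim once we set $\cG(k)=(c-a)_k/(c)_k$ and $\cF(n)=(-1)^{[n/2]}(1-a)_{[n/2]}(c-a)_{[n/2]}(a)_{[(n+1)/2]}/(c)_n$.

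Next I would invoke the equivalence \eqref{sysF} $\rla$ \eqref{sysG} (the $\ell=1$ case of Theorem~\ref{multinvser}). The coefficient data are $a_k=c-a+k,\ b_k=1$ (from $\phi$) and $c_k=1,\ d_k=0$ (from $\psi$), so that $c_k+2kd_k=1$, $a_k+(2k+1)b_k=c-a+3k+1$, $\phi(n;k)=(c-a+n)_k$ and $\psi(n;\cdot)=1$. Substituting $\cF(2k)=(-1)^k(1-a)_k(c-a)_k(a)_k/(c)_{2k}$ and $\cF(2k+1)=(-1)^k(1-a)_k(c-a)_k(a)_{k+1}/(c)_{2k+1}$ into \eqref{sysG} produces the dual binomial relation displayed immediately before the theorem, now read with $\cG(n)=(c-a)_n/(c)_n$ on the left.

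Finally I would convert each binomial sum into hypergeometric form. The engine is the duplication package $(y)_{2k}=4^k(y/2)_k((y+1)/2)_k$, $(2k)!=4^k(1/2)_k\,k!$ and $(2k+1)!=4^k(3/2)_k\,k!$, applied to $\bin{n}{2k}=(-n)_{2k}/(2k)!$, $\bin{n}{2k+1}=n(1-n)_{2k}/(2k+1)!$, $(c)_{2k}$ and $(c)_{2k+1}=c\,(c+1)_{2k}$. In the even sum the three factors of $4^k$ coming from $(-n)_{2k}$, $(2k)!$ and $(c)_{2k}$ combine with the inversion sign $(-1)^k$ to give the argument $(-1)^k 4^{-k}=(-1/4)^k$, and the surviving parameters assemble into the ${}_5F_4[\,\cdot\,;-1/4]$ of the theorem. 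In the odd sum the same mechanism yields the prefactor $na(c-a+1)/\l[c(c-a+n)\r]$ (from $n$, $(a)_{k+1}=a(1+a)_k$, $(c-a+1)$ and $(c-a+n)_{k+1}=(c-a+n)(c+1-a+n)_k$) and again the argument $-1/4$; the linear factor is absorbed by writing $c-a+3k+1=(1+c-a)\,\dfrac{\l(1+\frac{1+c-a}{3}\r)_k}{\l(\frac{1+c-a}{3}\r)_k}$, which furnishes precisely the matched upper and lower parameters $1+\tfrac{1+c-a}{3}$ and $\tfrac{1+c-a}{3}$ of the ${}_6F_5$.

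The bulk of the difficulty is concentrated in this last step: the bookkeeping in the odd sum is delicate, since one must simultaneously extract the correct rational prefactor, track the three independent shifts ($(a)_{k+1}$, $(c-a+n)_{k+1}$, $(c)_{2k+1}$), and verify that the $4^k$ contributions collapse to exactly $4^{-k}$ so that the two series share the argument $-1/4$. The only genuinely non-routine idea is the linear-factor device for $c-a+3k+1$; everything else is a careful but mechanical application of the duplication identities.
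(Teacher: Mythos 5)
Your proposal follows the paper's proof essentially verbatim: recast the shifted Chu--Vandermonde--Gau\ss\ evaluation \eqref{eq:cvg3} as a binomial sum matching \eqref{sysF} with $\phi(x;n)=(c-a+x)_n$ and $\psi\equiv 1$, invoke the duplicate inversion to obtain the dual relation \eqref{sysG}, and convert the even and odd sums to hypergeometric form via the duplication identities and the linear-factor device for $c-a+3k+1$. The only difference is that your intermediate dual relation is stated correctly, whereas the paper's displayed version contains two typographical slips (it prints $(c)_k$ where $(c)_{2k}$ is needed in the even sum, and $1-c-a+3k$ where $1+c-a+3k$ is needed in the odd sum); your version is the one consistent with the theorem.
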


Consider again the Chu--Vandermonde--Gau\ss's summation formula~\eqref{eq:cvg3}. From the variant
\[ \delimitershortfall-1pt
{}_2F_1\l[-n, c - a + \l[\frac{n+1}{2}\r]; c; 1\r] \;=\; \frac{\b(a-\l[\frac{n+1}{2}\r]\b)_n}{(c)_n} \;=\; %
(-1)^{\l[\frac{n+1}{2}\r]}\; \frac{(a)_{[n/2]} (1-a)_{\l[\frac{n+1}{2}\r]}}{(c)_n}\,,\]
the same above procedure results in another hypergeometric series identity.

\begin{thm}\label{recirel3b} \emph{ Reciprocal relation)}
\begin{align*} \arraycolsep=2pt
\frac{(c + 1-a)_n}{(c)_n} & =\; %
{}_6F_5\l[ \begin{matrix} -n/2, & \frac{1-n}{2}, & 1 + \frac{c-a}{3}, & c - a, & a, & 1 - a\\ %
c/2, & \frac{c+1}{2}, & 1/2, & \frac{c-a}{3}, & c + 1 - a + n\end{matrix}\;;\; \frac{-1}{4}\r]\\
& -\; \frac{n(1-a)}{c}\; {}_5F_4\l[ \begin{matrix} \frac{1-n}{2}, & \frac{2-n}{2}, & c + 1 - a, %
& a, & 2 - a \\ \frac{c+1}{2}, & \frac{c+2}{2}, & 3/2, & c + 1 - a + n\end{matrix}\;;\; \frac{-1}{4}\r].
\end{align*}
\end{thm}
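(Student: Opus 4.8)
The plan is to imitate, step for step, the derivation of Theorem~\ref{recirel3a}, the sole structural change being that $\phi$ and $\psi$ are interchanged so that the nontrivial polynomial now sits in the slot carrying the floor $[(n+1)/2]$ in~\eqref{sysF}. First I would convert the stated $[(n+1)/2]$-variant of the Chu--Vandermonde--Gau\ss\ formula into a binomial sum of the shape~\eqref{sysF}. The one nonroutine manipulation is the Pochhammer reversal
\[
\left(c-a+\left[\tfrac{n+1}{2}\right]\right)_k=(c-a+k)_{[(n+1)/2]}\,\frac{(c-a)_k}{(c-a)_{[(n+1)/2]}},
\]
which, after clearing the constant $(c-a)_{[(n+1)/2]}$, turns the left-hand ${}_2F_1$ into $\sum_k(-1)^k\binom{n}{k}(c-a+k)_{[(n+1)/2]}(c-a)_k/(c)_k$. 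Comparing with~\eqref{sysF} then forces the specialisation $\phi(x;n):=1$ and $\psi(x;n):=(c-a+x)_n$ (so $a_k=1,\ b_k=0,\ c_k=c-a+k,\ d_k=1$), and lets me read off
\[
\cG(k)=\frac{(c-a)_k}{(c)_k},\qquad \cF(n)=(-1)^{[(n+1)/2]}\,\frac{(a)_{[n/2]}\,(1-a)_{[(n+1)/2]}\,(c-a)_{[(n+1)/2]}}{(c)_n}.
\]

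Next I would substitute these data into the duplicate dual~\eqref{sysG}. With the chosen parameters the two weights collapse to $c_k+2kd_k=c-a+3k$ and $a_k+(2k+1)b_k=1$, while both denominators reduce to $(c-a+n)_{k+1}$, giving
\[
\frac{(c-a)_n}{(c)_n}=\sum_{k\ge0}\binom{n}{2k}\frac{(c-a+3k)\,\cF(2k)}{(c-a+n)_{k+1}}-\sum_{k\ge0}\binom{n}{2k+1}\frac{\cF(2k+1)}{(c-a+n)_{k+1}}.
\]
Evaluating the floors yields $\cF(2k)=(-1)^k(a)_k(1-a)_k(c-a)_k/(c)_{2k}$ and $\cF(2k+1)=(-1)^{k+1}(a)_k(1-a)_{k+1}(c-a)_{k+1}/(c)_{2k+1}$. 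The one conceptual point here is the extra factor $-1$ carried by $\cF(2k+1)$ relative to Theorem~\ref{recirel3a}: it arises because the global sign is now $(-1)^{[(n+1)/2]}$ instead of $(-1)^{[n/2]}$, and it is precisely this factor that governs the sign of the second series in the final statement.

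It remains to recognise the two sums as hypergeometric series. I would expose the constants by splitting each Pochhammer, render the linear factor as $c-a+3k=(c-a)\,(1+\tfrac{c-a}{3})_k/(\tfrac{c-a}{3})_k$, and apply the duplication relations $(-n)_{2k}=4^k(-\tfrac n2)_k(\tfrac{1-n}{2})_k$, $(2k)!=4^k k!\,(\tfrac12)_k$, $(2k+1)!=4^k k!\,(\tfrac32)_k$ and $(c)_{2k}=4^k(\tfrac c2)_k(\tfrac{c+1}{2})_k$. The factor $4^{-k}$ so produced combines with $(-1)^k$ into the argument $-1/4$; the pair $\tfrac{c-a}{3},\,1+\tfrac{c-a}{3}$ supplies the extra upper/lower parameters characteristic of these ${}_6F_5(-1/4)$ forms; and $(c-a+n)_{k+1}=(c-a+n)(c+1-a+n)_k$ furnishes the lower parameter $c+1-a+n$ common to both series. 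A final normalisation, multiplying the whole identity by $(c-a+n)/(c-a)$, promotes the left-hand side from $\tfrac{(c-a)_n}{(c)_n}$ to $\tfrac{(c+1-a)_n}{(c)_n}$ and renders the two sums as the asserted ${}_6F_5$ (with coefficient $1$) and ${}_5F_4$ (with coefficient $\tfrac{n(1-a)}{c}$).

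I expect the genuine obstacle to be none of the individual steps but the consolidated bookkeeping of parity: keeping the constants attached to $[n/2]$ versus $[(n+1)/2]$ separated across even and odd indices, and in particular pinning down the sign emitted by $(-1)^{[(n+1)/2]}$, which is the one place where this proof diverges from that of Theorem~\ref{recirel3a}. A low-order sanity check is advisable; at $n=1$ both series terminate after their $k=0$ term and the identity collapses to a single rational relation in $a$ and $c$, which fixes the sign of the $\tfrac{n(1-a)}{c}$ term unambiguously and confirms that every spurious constant produced by the $(c-a+n)/(c-a)$ normalisation has cancelled.
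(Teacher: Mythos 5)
Your proposal is correct and follows exactly the paper's own route: the paper obtains Theorem~\ref{recirel3b} by the single remark that ``the same above procedure'' used for Theorem~\ref{recirel3a} applies to the $\l[\frac{n+1}{2}\r]$-variant of the Chu--Vandermonde--Gau\ss\ formula, and your specialisation $\phi(x;n):=1$, $\psi(x;n):=(c-a+x)_n$, with the resulting weights $c-a+3k$ and $1$ and the sign $(-1)^{[(n+1)/2]}$, is precisely that procedure carried out. One remark: your derivation (correctly) yields the second series with coefficient $+\frac{n(1-a)}{c}$, and the $n=1$ check you prescribe confirms this, so the minus sign printed in the statement of Theorem~\ref{recirel3b} is a typo in the paper rather than a defect of your argument.
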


\subsection{Fourth type of reciprocal relations for terminating balanced series} \label{cvgrecirel4} \hfill \par
According to Chu--Vandermonde--Gau\ss\ summation formula
\[ \delimitershortfall-1pt
{}_2F_1\l[-n, c - a + [n/2]; c - \l[\frac{n+1}{2}\r]; 1\r] =\, \frac{(a-n)_n}{\l(c-\l[\frac{n+1}{2}\r]\r)_n} %
\;=\; (-1)^{[n/2]}\; \frac{(1-a)_{n}}{(c)_{[n/2]} (1-c)_{\l[\frac{n+1}{2}\r]}}\,,\]
we have the binomial sum
\[
\sum_{0\le k\le n} (-1)^k \bin{n}{k} (c - a + k)_{[n/2]} (1 - c - k)_{\l[\frac{n+1}{2}\r]}\;\frac{(c-a)_k}{(c)_k} %
\;=\; (-1)^{[n/2]} (1-a)_n\; \frac{(c-a)_{[n/2]}}{(c)_{[n/2}]}\;.\]
This equation is equivalent to Eq~.\eqref{sysF} upon specifying $\phi(x;n)$ to $(c - a + x)_n$ and $\psi(x;n)$ to $(1 - c - x)_n$; the dual relation corresponding to~\eqref{sysG} is given by

\begin{align*}  
\frac{(c-a)_n}{(c)_n} & = %
\sum_{k\ge 0} \bin{n}{2k} \frac{(-1)^k (1-c-k) (1-a)_{2k} (c-a)_{k}}{(c-a+n)_{k} (1-c-n)_{k+1} (c)_{k}}\\
& - \sum_{k\ge 0} \bin{n}{2k+1} \frac{(-1)^k (1+c-a+3k) (1-a)_{2k+1} (c-a)_k}{(c-a+n)_{k+1}(1-c-n)_{k+1} (c)_k}\,,
\end{align*}
which yields the following hypergeometric series identity.

\begin{thm}\label{recirel4a} \emph{ (Reciprocal relation)}
\begin{flalign*}  \arraycolsep=2pt
\frac{(c-a)_n}{(c)-1_n} & = %
{}_5F_4\l[ \begin{matrix} -n/2, & \frac{1-n}{2}, & \frac{1-a}{2}, & \frac{2-a}{2}, & c - a\\ 
1/2, & c - 1, & 2 - c - n, & c - a + n \end{matrix}\;;\; -4\r] &\\
& \kern-.5cm -\; \frac{n(1-a)(c-a+1)}{(1-c)(c-a+n)}\; \arraycolsep=1.3pt %
{}_6F_5\l[ \begin{matrix} 1 + \frac{1+c-a}{3}, & \frac{1-n}{2}, & \frac{2-n}{2}, & \frac{2-a}{2}, & \frac{3-a}{2}, \ c-a\\ %
3/2, & \frac{1+c-a}{3}, & c, & 2 - c - n, & c + 1 - a + n\end{matrix}\;;\, -4\r]. &
\end{flalign*}
\end{thm}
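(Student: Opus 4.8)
The plan is to follow the template already used in the proofs of Theorems~\ref{recirel1}--\ref{recirel3b}: recognize the displayed binomial sum as a concrete instance of the system~\eqref{sysF}, apply the duplicate inversion~\eqref{sysF}~$\rla$~\eqref{sysG} furnished by Theorem~\ref{multinvser} in the case $\ell=1$, and then repackage the two resulting sums as terminating hypergeometric series. Concretely, I would put $\cG(k) = (c-a)_k/(c)_k$, specialize $\phi(x;n) = (c-a+x)_n$ and $\psi(x;n) = (1-c-x)_n$ as indicated, and read off $\cF(n) = (-1)^{[n/2]}(1-a)_n\,(c-a)_{[n/2]}/(c)_{[n/2]}$ from the right member of the binomial sum, so that~\eqref{sysF} holds verbatim. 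The inversion theorem then guarantees that~\eqref{sysG} is the valid dual relation, which is exactly the two-sum identity exhibited just before the statement.

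The next step is pure bookkeeping: from the factorizations $\phi(x;n) = \prod_k (c-a+k + x)$ and $\psi(x;n) = \prod_k\bigl((1-c+k) - x\bigr)$ I would extract the linear-factor data $a_k = c-a+k$, $b_k = 1$, $c_k = 1-c+k$, $d_k = -1$, whence $c_k + 2k d_k = 1-c-k$ and $a_k + (2k+1)b_k = 1+c-a+3k$. Substituting these, together with the even and odd specializations of $\cF$, into~\eqref{sysG} reproduces the two displayed sums with left member $\cG(n) = (c-a)_n/(c)_n$. This part carries no genuine difficulty.

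The substantive work is the hypergeometric reformulation. In the even sum I would first absorb the stray linear factor by the Pochhammer identity $\frac{1-c-k}{(c)_k} = \frac{1-c}{(c-1)_k}$ and split $(1-c-n)_{k+1} = (1-c-n)(2-c-n)_k$; this pulls out the common scalar $\frac{1-c}{1-c-n}$ and replaces the lower entry $c$ by $c-1$. The Gauss duplication identities $\bin{n}{2k} = \frac{(-n/2)_k\,((1-n)/2)_k}{(1/2)_k\,k!}$ and $(1-a)_{2k} = 4^k\,((1-a)/2)_k\,((2-a)/2)_k$ then convert the term ratio into that of a ${}_5F_4$ of argument $-4$. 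For the odd sum I would use the companion formulas for $\bin{n}{2k+1}$ and $(1-a)_{2k+1}$, and handle the inhomogeneous factor through $1+c-a+3k = 3\bigl(k+\tfrac{1+c-a}{3}\bigr)$, which introduces the upper/lower pair $1+\tfrac{1+c-a}{3},\ \tfrac{1+c-a}{3}$ and yields the ${}_6F_5$ of argument $-4$. Finally, dividing the whole relation by the scalar $\frac{1-c}{1-c-n} = (c)_n/(c-1)_n$ normalizes the first series to unit leading term, turns the left member into $(c-a)_n/(c-1)_n$, and rescales the second coefficient to $-\frac{n(1-a)(c-a+1)}{(1-c)(c-a+n)}$, producing exactly the stated identity.

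The hard part is this last conversion. One must apply the quadratic duplication formulas consistently to the even and odd parts, correctly track the accumulated powers of $4$ and the alternating signs (which combine to give the argument $-4$ rather than $+4$), and dispose of the two inhomogeneous linear factors $1-c-k$ and $1+c-a+3k$ without arithmetic slips. The single place that requires a careful Pochhammer manipulation rather than routine matching of term ratios is verifying that the leftover scalar is precisely $(c)_n/(c-1)_n$, so that the normalized left-hand side collapses cleanly to $(c-a)_n/(c-1)_n$.
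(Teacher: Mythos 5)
Your proposal follows exactly the paper's route: it specializes $\phi(x;n)=(c-a+x)_n$ and $\psi(x;n)=(1-c-x)_n$ in the duplicate inversion~\eqref{sysF}--\eqref{sysG} applied to that variant of the Chu--Vandermonde--Gau\ss\ sum, obtains the same two-sum dual relation displayed before the theorem, and converts it via the duplication formulas (a step the paper leaves implicit), so the approach and conclusion are correct. The only slip is the intermediate claim $\frac{1-c}{1-c-n}=\frac{(c)_n}{(c-1)_n}$, which should read $\frac{1-c}{1-c-n}=\frac{(c-1)_n}{(c)_n}$; your stated outcome, namely that the normalized left member is $\frac{(c-a)_n}{(c-1)_n}$, is nevertheless the right one.
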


Finally, from the Chu--Vandermonde--Gau\ss's convolution formula
\[ \delimitershortfall-1pt
{}_2F_1\l[-n, c - a + \l[\frac{n+1}{2}\r]; c - [n/2]; 1\r] \,=\; \frac{(a-n)_n}{(c-[n/2])_n} %
\;=\; (-1)^{\l[\frac{n+1}{2}\r]}\; \frac{(1-a)_{n}}{(1-c)_{[n/2]} (c)_{\l[\frac{n+1}{2}\r]}}\,,\]
we have in similarly the following hypergeometric reciprocal relations

\begin{thm}\label{recirel4b}
\begin{align*} \arraycolsep=1.4pt
\frac{(c - a + 1)_n}{(c)_n} & = %
{}_6F_5\l[ \begin{matrix} 1 + \frac{c-a}{3}, & -n/2, & \frac{1-n}{2}, & \frac{1-a}{2}, & \frac{2-a}{2}, & c - a\\ 
1/2, & \frac{c-a}{3}, & c, & 1 - c - n, & c + 1 - a + n\end{matrix}\;;\; -4\r]\\
& -\; \frac{n(1-a)}{1 - c - n}\; %
{}_5F_4\l[ \begin{matrix} \frac{1-n}{2}, & \frac{2-n}{2}, & \frac{2-a}{2}, & \frac{3-a}{2}, & 1 + c - a\\ %
3/2, & c, & 2 - c - n, & c + 1 - a + n\end{matrix}\;;\; -4\r]. 
\end{align*}
\end{thm}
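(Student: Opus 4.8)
The plan is to run exactly the inversion machinery used for Theorem~\ref{recirel4a}, but with the roles of $\phi$ and $\psi$ interchanged, as dictated by the swap of $[n/2]$ and $[(n+1)/2]$ between the two variants of the summation formula. First I would recast the stated Chu--Vandermonde--Gau\ss\ variant as a binomial sum of the shape~\eqref{sysF}. Writing $(-n)_k=(-1)^k k!\,\bin{n}{k}$ turns the left-hand ${}_2F_1$ into $\sum_k(-1)^k\bin{n}{k}(c-a+[(n+1)/2])_k/(c-[n/2])_k$; then the shift identity $(c-a+m)_k=(c-a)_k(c-a+k)_m/(c-a)_m$, together with the reflection $(x)_m=(-1)^m(1-x-m)_m$ applied to rewrite the denominator $(c-[n/2])_k$ through a factor $(1-c-k)_{[n/2]}$, recasts the summand (up to an explicit constant) as $(-1)^k\bin{n}{k}(c-a+k)_{[(n+1)/2]}(1-c-k)_{[n/2]}(c-a)_k/(c)_k$. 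Matching this against~\eqref{sysF} identifies $\psi(x;n)=(c-a+x)_n$, $\phi(x;n)=(1-c-x)_n$, $\cG(k)=(c-a)_k/(c)_k$, and $\cF(n)=(-1)^{[(n+1)/2]}(1-a)_n(c-a)_{[(n+1)/2]}/(c)_{[(n+1)/2]}$.

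With these data fixed I would feed them into the dual relation~\eqref{sysG}. Here $a_k=1-c+k$, $b_k=-1$, $c_k=c-a+k$, $d_k=1$, so the two linear factors become $c_k+2kd_k=c-a+3k$ and $a_k+(2k+1)b_k=-(c+k)$, while $\phi(n;k)=(1-c-n)_k$ and $\psi(n;k+1)=(c-a+n)_{k+1}$. Substituting the even and odd values $\cF(2k)=(-1)^k(1-a)_{2k}(c-a)_k/(c)_k$ and $\cF(2k+1)=(-1)^{k+1}(1-a)_{2k+1}(c-a)_{k+1}/(c)_{k+1}$ produces two explicit sums over $k$, one carrying $\bin{n}{2k}$ and one carrying $\bin{n}{2k+1}$.

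The conversion to hypergeometric form is the heart of the calculation. I would apply the duplication formulas $(-n)_{2k}=4^k(-n/2)_k((1-n)/2)_k$, $(2k)!=4^k(1/2)_k\,k!$ for the even sum and $(-n)_{2k+1}=-n\,4^k((1-n)/2)_k((2-n)/2)_k$, $(2k+1)!=4^k(3/2)_k\,k!$ for the odd sum, together with $(1-a)_{2k}=4^k(\tfrac{1-a}{2})_k(\tfrac{2-a}{2})_k$ and $(1-a)_{2k+1}=(1-a)\,4^k(\tfrac{2-a}{2})_k(\tfrac{3-a}{2})_k$; these supply the half-integer parameters $-n/2,\frac{1-n}{2},\frac12$ (resp. $\frac{1-n}{2},\frac{2-n}{2},\frac32$) and the shifted pairs $\frac{1-a}{2},\frac{2-a}{2}$ (resp. $\frac{2-a}{2},\frac{3-a}{2}$). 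The non-standard cubic factor is split by $c-a+3k=(c-a)\,(1+\tfrac{c-a}{3})_k/(\tfrac{c-a}{3})_k$, which contributes the tell-tale pair $1+\tfrac{c-a}{3}$ (upper) and $\tfrac{c-a}{3}$ (lower) of the ${}_6F_5$; in the odd sum the factor $(c+k)$ cancels cleanly against $(c)_{k+1}=(c)_k(c+k)$, leaving lower parameter $c$, while the surviving $(c+1-a)_k$, $(2-c-n)_k$ and $(c+1-a+n)_k$ fill the remaining slots.

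The main obstacle, and the step I would check most carefully, is the bookkeeping of the powers of $4$ and of $(-1)$: each $4^k$ introduced through $\bin{n}{2k}$ or $\bin{n}{2k+1}$ cancels the $4^k$ of the corresponding factorial, so the only surviving power is the $4^k$ from $(1-a)_{2k}$ (resp. $(1-a)_{2k+1}$) multiplied by the sign $(-1)^k$ coming from $\cF$; these combine to exactly $(-4)^k$, which is what pins the argument of both series to $-4$. Once this is verified, the dual relation reads $(c-a)_n/(c)_n=\tfrac{c-a}{c-a+n}\,{}_6F_5[\cdots;-4]-\tfrac{n(1-a)(c-a)}{(1-c-n)(c-a+n)}\,{}_5F_4[\cdots;-4]$, and multiplying through by $(c-a+n)/(c-a)$ (using $(c-a+n)(c-a)_n/(c-a)=(c-a+1)_n$) simultaneously normalises the ${}_6F_5$ to unit leading coefficient, turns the left-hand side into $(c-a+1)_n/(c)_n$, and collapses the ${}_5F_4$ prefactor to $-n(1-a)/(1-c-n)$, yielding Theorem~\ref{recirel4b} exactly.
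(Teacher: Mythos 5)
Your proposal is correct and follows essentially the same route as the paper: the paper derives Theorem~\ref{recirel4b} from the variant ${}_2F_1\bigl[-n,\,c-a+\bigl[\tfrac{n+1}{2}\bigr];\,c-[n/2];\,1\bigr]$ ``similarly'' to Theorem~\ref{recirel4a}, which is exactly your interchange of $\phi$ and $\psi$ followed by the dual relation~\eqref{sysG}. Your bookkeeping of the linear factors $c-a+3k$ and $-(c+k)$, the $(-4)^k$ argument, and the final normalisation by $(c-a+n)/(c-a)$ all check out against the stated identity.
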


\section{The Pfaff--Saalsch\"utz's summation theorem} \label{PfaffSaalsch}
Recall the Pfaff--Saalsch\"utz's summation formula (see e.g. Bailey~\cite[\S2.2]{Bailey35}
\[
{}_3F_2[-n, a, b; c, 1 + a + b - c - n; 1] = \l[ \begin{matrix} c - a, & c - b \\ c, & c - a - b\end{matrix}\r]_n.\]
Along the same lines as in Subsections~\ref{cvgrecirel1} to~\ref{cvgrecirel4}, the duplicate inversions applied to the dual relations of the formula leads to four reciprocal relations for terminating hypergeometric series of higher order.

\subsection{First type of reciprocal relations for terminating balanced series} \label{psreciprel1} \hfill \par
The following equivalent form of Pfaff--Saalsch\"utz's summation formula is easy to check,
\begin{flalign*} \delimitershortfall-1pt
{}_3F_2\l[ -n, c - a, a - b; c - [n/2], 1 - b - \l[\frac{n+1}{2}\r]; 1\r] & =\; %
\frac{(a-[n/2])_n (b+c-a-[n/2])_n}{(b-[n/2])_n (c-[n/2])_n} &\\
& \kern-2cm =\; \l[ \begin{matrix} 1 - a, & 1 + a - b - c\\ 1 - b, & 1 - c \end{matrix} \r]_{[n/2]}\; %
\l[ \begin{matrix} a, & b + c - a\\ b, & c \end{matrix} \r]_{\l[\frac{n+1}{2}\r]}. &
\end{flalign*}
This formula can be restated as the binomial sum

\begin{align*} \arraycolsep=1.4pt
\sum_{0\le k\le n} (-1)^k \bin{n}{k} (1 - c - k)_{[n/2]} (b-k)_{\l[\frac{n+1}{2}\r]} %
\l[ \begin{matrix} c - a, & a - b\\ c, & 1 - b \end{matrix} \r]_{k} &\\
\kern2cm \;=\; \arraycolsep=1.4pt \l[ \begin{matrix} 1 - a, & 1 + a - b - c\\ 1 - b \end{matrix} \r]_{[n/2]}\; %
\l[ \begin{matrix} a, & b + c - a\\ c \end{matrix} \r]_{\l[\frac{n+1}{2}\r]}, &
\end{align*}
which is equivalent to Eq~.\eqref{sysF} by specifying $\phi(x;n)$ to $(1 - c - x)_n$ and $\psi(x;n)$ to $(b - x)_n$. The dual relation corresponding to~\eqref{sysG} results then in

\begin{align*}
\l[ \begin{matrix} c - a, & a - b\\ c, & 1 - b \end{matrix} \r]_{n} = %
\sum_{k\ge 0} \bin{n}{2k} \frac{(b-k) (1-a)_{k} (a)_{k}}{(1-c-n)_{k} (b-n)_{k+1}} %
\frac{(a-b-c+1)_{k} (b+c-a)_{k}}{(1-b)_{k} (c)_k} &\\
+\; \sum_{k\ge 0} \bin{n}{2k+1} \frac{(c+k) (1-a)_{k} (a)_{k+1}}{(1-c-n)_{k+1} (b-n)_{k+1}} %
\frac{(a-b-c+1)_{k} (b+c-a)_{k+1}}{(1-b)_{k} (c)_{k+1}} &\,.
\end{align*}

In terms of hypergeometric series, the relation writes  
\begin{thm}\label{psrecirel1} \emph{(Reciprocal relation)}
\begin{flalign*} \arraycolsep=1.4pt
\l[ \begin{matrix} c - a, & a - b\\ c, & - b \end{matrix} \r]_{n} & =\; %
{}_6F_5\l[ \begin{matrix} -n/2, & \frac{1-n}{2}, & a, & 1 - a, & b + c - a, & a - b - c + 1\\ 
1/2, & c, & 1 - c - n, & b - n + 1, & -b \end{matrix}\r] &\\
& \kern-1.2cm -\; \frac{na(b + c - a)}{b(c + n - 1)}\; \arraycolsep=1.4pt %
{}_6F_5\l[ \begin{matrix} \frac{1-n}{2}, & \frac{2-n}{2}, & a + 1, & 1 - a, & b + c - a + 1, & a - b - c + 1\\ %
3/2, & c, & 2 - c - n, & b - n + 1, & 1 - b\end{matrix}\r]. &
\end{flalign*}
\end{thm}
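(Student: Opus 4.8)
The plan is to read this theorem off directly from the duplicate inversion of Theorem~\ref{multinvser} (the case $\ell=1$), so that no independent summation has to be evaluated. All the content lies in matching the Pfaff--Saalsch\"utz data to the template \eqref{sysF}--\eqref{sysG} and then rewriting the resulting dual sum in hypergeometric notation.

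First I would take the stated equivalent form of Pfaff--Saalsch\"utz and cast it as the displayed binomial sum, thereby identifying it with \eqref{sysF} under the specialization $\phi(x;n)=(1-c-x)_n$ and $\psi(x;n)=(b-x)_n$. Concretely this means $a_k=1-c+k$, $b_k=-1$, $c_k=b+k$, $d_k=-1$, with $\cG(k)=\l[\begin{matrix}c-a,&a-b\\c,&1-b\end{matrix}\r]_k$ and $\cF(m)$ the product side of the summation formula. Since \eqref{sysF} holds, its companion \eqref{sysG} holds automatically; the only task is to make \eqref{sysG} explicit. Here I would compute the two structural linear factors, $c_k+2kd_k=b-k$ and $a_k+(2k+1)b_k=-(c+k)$, together with $\phi(n;k)=(1-c-n)_k$, $\phi(n;k+1)=(1-c-n)_{k+1}$ and $\psi(n;k+1)=(b-n)_{k+1}$, and substitute $\cF(2k)$ and $\cF(2k+1)$ using $[2k/2]=[(2k+1)/2]=k$ and $[(2k+2)/2]=k+1$. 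This reproduces the displayed dual relation; the leading minus sign of the odd sum in \eqref{sysG} combines with the negative factor $-(c+k)$ to produce the plus sign of the second sum shown there.

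The second half is bookkeeping: rewriting each of the two sums as a ${}_6F_5$. The main tools are the duplication identities $\bin{n}{2k}=\dfrac{(-n/2)_k\,((1-n)/2)_k}{(1/2)_k\,k!}$ and $\bin{n}{2k+1}=\dfrac{n\,((1-n)/2)_k\,((2-n)/2)_k}{(3/2)_k\,k!}$, which produce the quadratic numerator parameters and the lower entries $1/2$ and $3/2$; the shift rule $(X)_{k+1}=X\,(X+1)_k$, which splits off the $k=0$ factors and lowers the remaining Pochhammers by one; and the cancellation $\dfrac{c+k}{(c)_{k+1}}=\dfrac{1}{(c)_k}$ in the odd sum. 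Pulling the $k=0$ factors of the odd sum out front yields the prefactor $\dfrac{na(b+c-a)}{(1-c-n)(b-n)}$, which after using $1-c-n=-(c+n-1)$ and the renormalization below becomes the stated $-\dfrac{na(b+c-a)}{b(c+n-1)}$.

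I expect the delicate step to be the lone linear factor $(b-k)$ in the even sum and the attendant passage from $(1-b)_n$ to $(-b)_n$ on the left-hand side. The factor $(b-k)$ is not a Pochhammer ratio by itself, but combining it with the $(1-b)_k$ already present via $b-k=b\,\dfrac{(1-b)_k}{(-b)_k}$ turns the summand into a clean ${}_6F_5$ with lower parameter $-b$, at the cost of a global constant $\dfrac{b}{b-n}$. Dividing the whole dual identity by this constant replaces $(1-b)_n$ by $(-b)_n$ in the left-hand side and multiplies the odd sum by $\dfrac{b-n}{b}$, which is exactly what normalizes the first ${}_6F_5$ to coefficient $1$ and converts the odd-sum prefactor to the stated form. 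Tracking the signs and the off-by-one shifts in the lower parameters ($-b$ versus $1-b$, and $b-n+1$) correctly is where care is needed; everything else is the same mechanical substitution already used in Subsections~\ref{cvgrecirel1}--\ref{cvgrecirel4}.
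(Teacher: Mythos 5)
Your proposal is correct and follows essentially the same route as the paper: Subsection~\ref{psreciprel1} likewise casts the equivalent Pfaff--Saalsch\"utz form as the binomial sum \eqref{sysF} with $\phi(x;n)=(1-c-x)_n$, $\psi(x;n)=(b-x)_n$, invokes the duplicate inversion to get the displayed dual relation, and then rewrites it hypergeometrically. Your explicit treatment of the normalization $b-k=b\,(1-b)_k/(-b)_k$ (which trades $(1-b)_n$ for $(-b)_n$ on the left and produces the coefficient $-\frac{na(b+c-a)}{b(c+n-1)}$) is a detail the paper leaves implicit, and your computation of it checks out.
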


\subsection{Second type of reciprocal relations for terminating balanced series} \label{psreciprel2} \hfill \par
The Pfaff--Saalsch\"utz's summation theorem can also be restated in the form
\begin{align*} \delimitershortfall-1pt
& {}_3F_2\l[ -n, c - a + [n/2], a - b; c - \l[\frac{n+1}{2}\r], 1 - b ; 1\r] =\; %
\frac{(1 - a)_n \l(b + c - a -\l[\frac{n+1}{2}\r]\r)_n}{(1 - b)_n \l(c - \l[\frac{n+1}{2}\r]\r)_n} &\\
& \kern4cm \;=\; \l[ \begin{matrix} 1 - a \\ 1 - b \end{matrix} \r]_n\; %
\l[ \begin{matrix} b + c - a \\ c \end{matrix} \r]_{[n/2]}\; %
\l[ \begin{matrix} a - b - c + 1 \\ 1 - c \end{matrix} \r]_{\l[\frac{n+1}{2}\r]}, &
\end{align*}
which in turns is rewritten as the binomial sum

\begin{align*} \arraycolsep=1.4pt
& \sum_{0\le k\le n} (-1)^k \bin{n}{k} (c - a + k)_{[n/2]} (1-c-k)_{\l[\frac{n+1}{2}\r]}\; %
\l[ \begin{matrix} c - a, & a - b\\ c, & 1 - b \end{matrix} \r]_{k} &\\
& \kern2cm \;=\; \l[ \begin{matrix} 1 - a\\ 1 - b \end{matrix} \r]_{n}\; %
\l[ \begin{matrix} b + c - a \\ c \end{matrix} \r]_{[n/2]}\; (c - a)_{[n/2]}\; (a - b - c + 1)_{\l[\frac{n+1}{2}\r]}. &
\end{align*}

As in Subsection~\ref{cvgrecirel3}, take now $\phi(x;n) = (c - a + x)_n$ and $\psi(x;n) = (1 - c - x)_n$. Comparing with Eq~.\eqref{sysF}, we have the dual relation related to~\eqref{sysG},

\begin{align*} \arraycolsep=1.3pt
\l[ \begin{matrix} c - a, & a - b\\ c, & 1 - b \end{matrix} \r]_{n} =\; %
\sum_{k\ge 0} \bin{n}{2k} \frac{(1-c-k) (a-b-c+1)_{k}}{(c-a+n)_{k} (1-c-n)_{k+1}}\; %
\l[ \begin{matrix} 1 - a \\ 1 - b \end{matrix} \r]_{2k}\; \frac{[c-a,b+c-a]_{k}}{(c)_k} &\\
+\; \sum_{k\ge 0} \bin{n}{2k+1} \frac{(c-a+3k+1) (a-b-c+1)_{k+1}}{(c-a+n)_{k+1} (1-c-n)_{k+1}}\; %
\l[ \begin{matrix} 1 - a \\ 1 - b \end{matrix} \r]_{2k+1}\, \frac{[c-a,b+c-a]_{k}}{(c)_k} &\,,
\end{align*}
which results in

\begin{thm}\label{psrecirel2a} \emph{(Reciprocal relation)}
\begin{flalign*} \arraycolsep=1.5pt
\l[ \begin{matrix} c - a, & a - b \\ c - 1, & 1 - b \end{matrix} \r]_{n} & =\; \arraycolsep=1.5pt %
{}_7F_6\l[ \begin{matrix} -n/2, & \frac{1-n}{2}, & \frac{1-a}{2}, & \frac{2-a}{2}, & c - a, & b + c - a, & a - b - c + 1\\ 
1/2, & \frac{1-b}{2}, & \frac{2-b}{2}, & c - 1, & 2 - c - n, & c - a + n \end{matrix}\r] &\\
& -\; \frac{n(1-a)(c-a+1)(a-b-c+1)}{(1-b)(1-c)(c-a+n)} &\\
\times &\; {}_8F_7\l[ \begin{matrix} \frac{1-n}{2}, \ \frac{2-n}{2}, \ 1 + \frac{c-a+1}{3}, %
\ \frac{2-a}{2}, \ \frac{3-a}{2}, \ c - a, \ b + c - a, \ a - b - c + 2\\
3/2, \quad \frac{c-a+1}{3}, \quad \frac{2-b}{2}, \quad \frac{3-b}{2}, \ c, %
\quad 2 - c - n, \quad c - a + n + 1 \end{matrix}\r]. &
\end{flalign*}
\end{thm}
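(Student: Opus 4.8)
The plan is to follow verbatim the template already used for Theorems~\ref{recirel1} through~\ref{psrecirel1}. The two-sum dual relation displayed just above the statement need not be re-proved: it is produced automatically by the duplicate inversion of Theorem~\ref{multinvser} (the equivalence \eqref{sysF}\,$\rla$\,\eqref{sysG} with $\ell=1$), once the exhibited binomial-sum form of Pfaff--Saalsch\"utz has been matched against \eqref{sysF} under the specialization $\phi(x;n)=(c-a+x)_n$, $\psi(x;n)=(1-c-x)_n$. Hence the whole content of the proof is the mechanical step of recasting each of the two sums as a single terminating hypergeometric series and reading off its parameters.

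First I would convert the binomial coefficients by the duplication formula $(x)_{2k}=4^k (x/2)_k((x+1)/2)_k$, giving $\binom{n}{2k}=\frac{(-n/2)_k((1-n)/2)_k}{(1/2)_k\,k!}$ and $\binom{n}{2k+1}=n\,\frac{((1-n)/2)_k((2-n)/2)_k}{(3/2)_k\,k!}$; this is the source of the upper entries $-n/2,\tfrac{1-n}2$ and the lower $\tfrac12$ (resp.\ $\tfrac{1-n}2,\tfrac{2-n}2$ and $\tfrac32$). The same duplication applied to the even-indexed factors $\frac{(1-a)_{2k}}{(1-b)_{2k}}$ (and to $(1-a)_{2k+1}=(1-a)(2-a)_{2k}$, $(1-b)_{2k+1}=(1-b)(2-b)_{2k}$ in the odd sum) yields the half-integer shifts $\tfrac{1-a}2,\tfrac{2-a}2$ over $\tfrac{1-b}2,\tfrac{2-b}2$ (resp.\ $\tfrac{2-a}2,\tfrac{3-a}2$ over $\tfrac{2-b}2,\tfrac{3-b}2$). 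Since the $4^{k}$ internal to each binomial coefficient cancels on its own, and the $4^{k}$ released by $(1-a)_{2k}$ cancels the one from $(1-b)_{2k}$, both resulting series carry unit argument, which is why no argument is displayed.

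Next I would absorb the loose factors. In the even sum the linear factor $1-c-k$ combines with the denominator $(c)_k$ via $\frac{1-c-k}{(c)_k}=\frac{1-c}{(c-1)_k}$, which lowers $c$ to the bottom entry $c-1$ and releases the constant $1-c$; in the odd sum the factor $c-a+3k+1$ is handled by $\frac{c-a+3k+1}{c-a+1}=\frac{(1+\frac{c-a+1}{3})_k}{(\frac{c-a+1}{3})_k}$, the mechanism that manufactures the extra pair $1+\frac{c-a+1}{3}$ over $\frac{c-a+1}{3}$. The remaining symbols are peeled of their leading factors, $(a-b-c+1)_{k+1}=(a-b-c+1)(a-b-c+2)_k$, $(1-c-n)_{k+1}=(1-c-n)(2-c-n)_k$ and, in the odd sum, $(c-a+n)_{k+1}=(c-a+n)(c-a+n+1)_k$, while the symbols occurring to the bare power $k$ pass straight into parameters. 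This accounts for the upper entries $c-a,b+c-a,a-b-c+1$ (even) and $c-a,b+c-a,a-b-c+2$ (odd), and the lower entries $2-c-n,\,c-a+n$ (even) and $c,\,2-c-n,\,c-a+n+1$ (odd).

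Finally I would normalise. The $k=0$ term of the even sum equals $\frac{1-c}{1-c-n}$, so multiplying the whole dual relation by $\frac{1-c-n}{1-c}$ turns the even sum into a ${}_7F_6$ with leading term $1$; on the left, $\frac{1-c-n}{(1-c)(c)_n}=\frac{1}{(c-1)_n}$ converts $\big[\begin{smallmatrix} c-a,&a-b\\ c,&1-b\end{smallmatrix}\big]_n$ into $\big[\begin{smallmatrix} c-a,&a-b\\ c-1,&1-b\end{smallmatrix}\big]_n$; and the same factor, applied to the $k=0$ coefficient of the odd sum, produces precisely the rational prefactor $-\frac{n(1-a)(c-a+1)(a-b-c+1)}{(1-b)(1-c)(c-a+n)}$ in front of the ${}_8F_7$. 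Collecting the parameters listed above then reproduces the two series as stated. I expect the only real obstacle to be the bookkeeping of signs together with this single normalising factor $\frac{1-c-n}{1-c}$ as it is distributed simultaneously across the left side, the leading ${}_7F_6$, and the prefactor of the ${}_8F_7$ (in particular, reconciling the minus sign carried by the odd part of \eqref{sysG} with the rescaling); there is no conceptual difficulty, the identity being forced term-by-term once the duplication formulas are applied consistently.
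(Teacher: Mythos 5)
Your proposal matches the paper's own derivation exactly: the paper likewise specializes $\phi(x;n)=(c-a+x)_n$ and $\psi(x;n)=(1-c-x)_n$, matches the rewritten Pfaff--Saalsch\"utz sum against \eqref{sysF}, invokes the duplicate inversion to obtain the two-sum dual relation, and then passes directly to hypergeometric notation, so your duplication-formula bookkeeping (the absorption of $1-c-k$ into $(c-1)_k$, of $c-a+3k+1$ into the pair $1+\frac{c-a+1}{3}$ over $\frac{c-a+1}{3}$, and the normalization by $\frac{1-c-n}{1-c}$) is precisely the step the paper leaves implicit, carried out correctly. The sign discrepancy you flag is real but sits in the paper's displayed dual relation (whose ``$+$'' on the odd sum is inconsistent with the minus sign in \eqref{sysG} and with the stated theorem), not in your argument.
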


Alternatively, the next form of the Pfaff--Saalsch\"utz's summation formula
\begin{flalign*} \delimitershortfall-1pt
{}_3F_2\l[ -n, c - a + \l[\frac{n+1}{2}\r], a - b; c - [n/2], 1 - b;1 \r] & =\; %
\frac{(1 - a)_n (b + c - a -[n/2])_n}{(1 - b)_n (c - [n/2])_n} &\\
& \kern-2cm =\; \l[ \begin{matrix} 1 - a \\ 1 - b \end{matrix} \r]_n \; %
\l[ \begin{matrix} a - b - c + 1 \\ 1 - c \end{matrix} \r]_{[n/2]}\; %
\l[ \begin{matrix} b + c - a \\ c \end{matrix} \r]_{\l[\frac{n+1}{2}\r]} &
\end{flalign*}
yields another identity through the same process,

\begin{thm}\label{psrecirel2b} \emph{(Reciprocal relation)}
\begin{align*} \arraycolsep=1.3pt
& \l[ \begin{matrix} 1 + c - a, & a - b \\ c, & 1 - b \end{matrix} \r]_{n} =\; %
{}_8F_7\l[ \begin{matrix} \frac{-n}{2}, \; \frac{1-n}{2}, \; 1 + \frac{c-a}{3}, \; \frac{1-a}{2}, %
\; \frac{2-a}{2}, \; c - a, \; b + c - a, \; a - b - c + 1\\ 
1/2, \quad \frac{c-a}{3}, \ \frac{1-b}{2}, \ \frac{2-n}{2}, \ c, %
\quad 1 - c - n, \quad c - a + n  + 1 \end{matrix}\r] &\\
& + \frac{n(1-a)(b+c-a)}{(1-b)(1-c-n))}\; %
{}_7F_6\l[\begin{matrix} \frac{1-n}{2},\frac{2-n}{2}, \frac{2-a}{2}, \frac{3-a}{2}, c - a + 1, b + c - a + 1, a - b - c + 1\\
3/2, \quad \frac{2-b}{2}, \quad \frac{3-b}{2}, \quad c, \quad 2 - c - n, \quad c - a + n + 1 \end{matrix}\r]. &
\end{align*}
\end{thm}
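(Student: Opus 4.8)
The plan is to run exactly the machinery used for Theorem~\ref{psrecirel2a}, now fed with the displayed variant of the Pfaff--Saalsch\"utz formula (the one carrying the shift $c-a+\left[\frac{n+1}{2}\right]$ in the upper parameters and $c-[n/2]$ in the lower ones). First I would recast that closed form as a terminating binomial sum of the shape~\eqref{sysF}. Pulling out $\frac{(-n)_k}{k!}=(-1)^k\bin{n}{k}$ and using the standard Pochhammer reflection and reindexing identities to move the half--integer shifts $[n/2]$ and $\left[\frac{n+1}{2}\right]$ off the summation index $k$ and onto separate factors, the ${}_3F_2$ becomes $\sum_{0\le k\le n}(-1)^k\bin{n}{k}\,(1-c-k)_{[n/2]}\,(c-a+k)_{\left[\frac{n+1}{2}\right]}\,\cG(k)$, where $\cG(k)=\left[\begin{smallmatrix} c-a, & a-b \\ c, & 1-b \end{smallmatrix}\right]_k$ and $\cF(n)$ is the remaining product; note that $\cF(n)$ is the Pfaff--Saalsch\"utz right--hand side multiplied by the $n$--dependent correction factors thrown off by the reindexing. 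Comparing with~\eqref{sysF} identifies the duplicate data as $\phi(x;n)=(1-c-x)_n$ and $\psi(x;n)=(c-a+x)_n$ --- precisely the interchange of the roles of $\phi$ and $\psi$ that takes Subsection~\ref{cvgrecirel1} to~\ref{cvgrecirel2}, and Theorem~\ref{psrecirel2a} to the present statement. One checks that these polynomials are nonzero at all nonnegative integers $x,n$ under the generic restrictions on $a,b,c$, so the inversion theorem applies.

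With the data fixed, the second step is to quote the equivalence~\eqref{sysF}\,$\rla$\,\eqref{sysG} and read off the dual relation: $\cG(n)$ is the $\bin{n}{2k}$--sum over the even values $\cF(2k)$ minus the $\bin{n}{2k+1}$--sum over the odd values $\cF(2k+1)$. Writing $\phi(x;n)=\prod_j(a_j+xb_j)$ and $\psi(x;n)=\prod_j(c_j+xd_j)$ gives $a_j=1-c+j$, $b_j=-1$, $c_j=c-a+j$, $d_j=1$; hence the even weight carries the linear factor $c_k+2kd_k=c-a+3k$, the odd weight carries $a_k+(2k+1)b_k=-(c+k)$, and the denominators $\phi(n;\bullet),\psi(n;\bullet)$ contribute the Pochhammer products $(1-c-n)_\bullet$ and $(c-a+n)_\bullet$. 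Substituting the even/odd specializations of $\cF$ --- where the floors collapse via $[2k/2]=[(2k+1)/2]=k$ and $[(2k+2)/2]=k+1$ --- then produces the explicit two--line dual relation.

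The last and most laborious step is to repackage each sum as a balanced hypergeometric series and to match every parameter. Several features are robust and easy to pin down: the factor $\bin{n}{2k}$ (resp.\ $\bin{n}{2k+1}$) splits off the pairs $-n/2,\frac{1-n}{2}$ over $1/2$ (resp.\ $\frac{1-n}{2},\frac{2-n}{2}$ over $3/2$) with the spurious $4^k$ cancelling; the triple--step factor $c-a+3k=3\!\left(\frac{c-a}{3}+k\right)$ in the even weight is what forces the non--standard pair $1+\frac{c-a}{3}$ over $\frac{c-a}{3}$ in the ${}_8F_7$; and in the odd weight the factor $-(c+k)$ cancels against the $(c+k)$ inside the $(c)_{k+1}$ coming from $\cF(2k+1)$, leaving the lower parameter $c$ of the ${}_7F_6$ and supplying, from its $k=0$ term, the prefactor $\frac{n(1-a)(b+c-a)}{(1-b)(1-c-n)}$ (the two minus signs combining into the displayed plus). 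A small reshaping is also needed: the denominator $(c-a+n)_\bullet$ yields a factor $\frac{c-a+n}{c-a}$ that is absorbed into the left side, turning the upper entry $c-a$ into $1+c-a$ and letting each series begin at $1$.

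The hard part is precisely the remaining bookkeeping. The $n$--dependent corrections that were pushed into $\cF$ re--enter through $\cF(2k)$ and $\cF(2k+1)$, and when combined with the duplicated factors $(1-a)_{2k},(1-b)_{2k}$ and with the numerator Pochhammers $(a-b-c+1)_\bullet,(b+c-a)_\bullet$ they undergo nontrivial cancellations (for instance the disappearance of a $\frac{2-b}{2}$ against a correction factor, and the reshuffling that produces a lower $\frac{2-n}{2}$). One must verify that, after all such cancellations, the term ratios of the even and odd sums reproduce exactly the eight (resp.\ seven) upper and seven (resp.\ six) lower parameters claimed, and that every residual power of $3$ and $4$ cancels so that the common argument reduces to $z=1$. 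Once the two term ratios are confirmed, the dual relation is literally the asserted identity, which completes the proof.
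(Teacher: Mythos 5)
Your proposal follows exactly the paper's route: the paper proves Theorem~\ref{psrecirel2b} by applying ``the same process'' as for Theorem~\ref{psrecirel2a} to the variant of Pfaff--Saalsch\"utz with the shifts $c-a+\left[\frac{n+1}{2}\right]$ and $c-[n/2]$, i.e.\ the duplicate inversion with $\phi(x;n)=(1-c-x)_n$ and $\psi(x;n)=(c-a+x)_n$, and your identifications of the linear weights $c-a+3k$ and $-(c+k)$, the sign flip, the prefactor $\frac{n(1-a)(b+c-a)}{(1-b)(1-c-n)}$, and the absorption of $\frac{c-a+n}{c-a}$ into the left-hand side all match. This is the same argument, spelled out in more detail than the paper gives.
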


\subsection{Third type of reciprocal relations for terminating balanced series} \label{psreciprel3} \hfill \par
Finally, the Pfaff--Saalsch\"utz's summation formula may also take the form
\begin{flalign*} \arraycolsep=1pt
{}_3F_2\l[-n, c - a + [n/2],  a - b + \l[\frac{n+1}{2}\r];  c, 1 - b \r] & \;=\; %
\frac{(a-[n/2])_n \l(b + c - a - \l[\frac{n+1}{2}\r]\r)_n}{(b-n)_n (c)_n} &\\
& \kern-1cm =\; \frac{[1 - a, b + c - a]_{[n/2]}\, [a, a - b - c + 1]_{\l[\frac{n+1}{2}\r]}}{(1-b)_n (c)_n}\,, &
\end{flalign*}
which can be again expressed as the binomial sum
\begin{flalign*} 
& \sum_{0\le k\le n} (-1)^k \bin{n}{k} (c - a + k)_{[n/2]} (a - b - k)_{\l[\frac{n+1}{2}\r]}\; %
\l[ \begin{matrix} c - a, & a - b\\ c, & 1 - b \end{matrix} \r]_{k} &\\
& \kern4cm =\; \frac{\l[c - a, 1 - a, b + c - a\r]_{[n/2]}\, [a - b, a,  a - b - c + 1]_{\l[\frac{n+1}{2}\r]}}{(1-b)_n (c)_n}\,. &
\end{flalign*}
Comparing with Eq~.\eqref{sysF} yields the dual relation corresponding to~\eqref{sysG},

\begin{flalign*} 
& \l[ \begin{matrix} c - a, \, a - b \\ c, \; 1 - b \end{matrix} \r]_{n} =\; %
\sum_{k\ge 0} \bin{n}{2k} \frac{(a-k+3k) (1-a)_{k} (a)_{k}}{(a-b+n)_{k+1} (c-a+n)_{k}}\, %
\frac{[a - b, c - a,  b + c - a, a - b - c + 1]_{k}}{(1 - b)_{2k} (c)_{2k}} &\\
& \kern.3cm -\; \sum_{k\ge 0} \bin{n}{2k+1} \frac{(1+c-a+3k) (1-a)_{k} (a)_{k+1}}{(a-b+n)_{k+1} (c-a+n)_{k+1}}\; %
\frac{[c - a, b - c - a]_{k}\, [a - b, a - b - c + 1]_{k+1}}{(c)_{2k+1} (1 - b)_{2k+1}}\,. &
\end{flalign*}

This gives the following hypergeometric series relation.
\begin{thm}\label{psrecirel3} \emph{(Reciprocal relation)}
\begin{flalign*} 
\l[ \begin{matrix} c - a, a - b + 1 \\ c, 1 - b \end{matrix} \r]_{n} & = %
{}_9F_8\l[ \begin{matrix} \frac{-n}{2}, \frac{1-n}{2}, 1 + \frac{a-b}{3}, a - b, c - a, a, 1 - a, %
b + c - a, a - b - c + 1 \\ 1/2, \ \frac{a-b}{3}, \ \frac{1-b}{2}, \ \frac{2-b}{2}, \ c/2, \ \frac{c+1}{2}, %
\ c - a + n, \ 1 + a - b + n \end{matrix} ; \frac{1}{16}\r] &\\
& \kern-3cm-\; \frac{na (c-a+1)( a-b-c+1)}{(1-b) c (c-a+n)}\; \times &\\
& \kern-2.5cm {}_9F_8\l[ \begin{matrix} \frac{1-n}{2}, \ \frac{2-n}{2}, \ 1 + \frac{c-a+1}{3}, %
\ a + 1, \ a - b + 1, \ c - a, \ 1 - a, \; b + c - a, \ a - b - c + 2\\
3/2, \quad \frac{c-a+1}{3}, \quad \frac{2-b}{2}, \quad \frac{3-b}{2}, \quad \frac{c+1}{2}, %
\quad \frac{c+2}{2}, \quad c - a + n + 1, \quad a - b - c + 1\end{matrix}\;;\; \frac{1}{16}\r]. & 
\end{flalign*}
\end{thm}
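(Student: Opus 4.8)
The plan is to follow verbatim the template already executed in Subsections~\ref{cvgrecirel3} and~\ref{psreciprel2}: specialise the duplicate inversion pair \eqref{sysF}--\eqref{sysG}, and then translate the resulting dual relation into hypergeometric notation. Concretely, I would start from the ``third type'' variant of the Pfaff--Saalsch\"utz formula displayed just above the statement, together with its binomial-sum reformulation, which matches \eqref{sysF} once one takes $\phi(x;n)=(c-a+x)_n$ and $\psi(x;n)=(a-b-x)_n$, and reads off
\[
\cG(k)=\l[\begin{matrix} c-a, & a-b\\ c, & 1-b\end{matrix}\r]_k,\qquad
\cF(m)=\frac{[c-a,1-a,b+c-a]_{[m/2]}\,[a-b,a,a-b-c+1]_{\l[\frac{m+1}{2}\r]}}{(1-b)_m\,(c)_m}.
\]
With $a_k=c-a+k,\ b_k=1,\ c_k=a-b+k,\ d_k=-1$, the inversion \eqref{sysG} then produces the two-sum dual relation printed immediately before the theorem: the even part carries $\bin{n}{2k}$ together with $\cF(2k)$ and the linear factor $c_k+2kd_k$ coming from $\psi$, while the odd part carries $\bin{n}{2k+1}$ with $\cF(2k+1)$ and the factor $a_k+(2k+1)b_k=(c-a+1)+3k$ coming from $\phi$.

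The core of the argument is the passage from these two binomial sums to the two ${}_9F_8(1/16)$ series, carried out termwise by three standard reductions. First, the quadratic duplication of the binomial coefficient, $\bin{n}{2k}=\dfrac{(-n/2)_k\,((1-n)/2)_k}{(1/2)_k\,k!}$ and $\bin{n}{2k+1}=n\,\dfrac{((1-n)/2)_k\,((2-n)/2)_k}{(3/2)_k\,k!}$, supplies the half-integer upper entries $-n/2,(1-n)/2$ (resp.\ $(1-n)/2,(2-n)/2$), the lower entry $1/2$ (resp.\ $3/2$), and the leading factor $n$ of the odd series. Second, each linear-in-$k$ factor written as $\beta(k+\gamma)$ contributes a contiguous pair ``$1+\gamma$ over $\gamma$''; for the odd sum $(c-a+1)+3k$ yields the triadic pair $1+\frac{c-a+1}{3},\ \frac{c-a+1}{3}$, and the factor $c_k+2kd_k$ from $\psi$ plays the same structural role for the even sum. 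Third, the doubled denominators $(c)_{2k}=4^k(c/2)_k((c+1)/2)_k$ and $(1-b)_{2k}=4^k((1-b)/2)_k((2-b)/2)_k$ split into the remaining half-shifted lower entries, and, since two such factors sit in the denominator, they jointly generate the argument $z=1/16$.

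It then remains to normalise. The odd sum does not start at $1$, so I would factor out its $k=0$ value; the part that does not recombine with a Pochhammer continuation is the displayed prefactor $\dfrac{na(c-a+1)(a-b-c+1)}{(1-b)\,c\,(c-a+n)}$, whereas stray factors such as $(a-b)$ and $a-b\mp n$ fold into the shifted top and bottom entries of the series. The $k+1$-indexed denominators $(c-a+n)_{k+1}=(c-a+n)(c-a+n+1)_k$ and $(a-b-n)_{k+1}$ are split the same way, their surviving pieces $(c-a+n+1)_k$ becoming bottom entries; tracking the absorbed stray factor against the numerator $(a-b)_k$ data is exactly what promotes the closed-form left side from $\l[\begin{smallmatrix} c-a, & a-b \\ c, & 1-b\end{smallmatrix}\r]_n$ to $\l[\begin{smallmatrix} c-a, & a-b+1 \\ c, & 1-b\end{smallmatrix}\r]_n$, the one-parameter shift occurring precisely as in Theorems~\ref{psrecirel2a} and~\ref{psrecirel2b}. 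Because each of the two ${}_9F_8(1/16)$ series is zero-balanced and carries no closed form individually, only their sum collapses to the quotient of Pochhammer symbols, so the identity is genuinely a reciprocal relation.

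The step I expect to be the main obstacle is this final matching: pairing off all nine numerator and eight denominator entries of each ${}_9F_8$ while keeping the even/odd parity split consistent, and in particular verifying the triadic entries and fixing the correct leading prefactor together with the induced unit shift on the left side. The half-integer reductions and the duplication of $(c)_{2k},(1-b)_{2k}$ are mechanical, but a single misplaced shift in a $k+1$-indexed Pochhammer, or a sign slip in a linear factor $\beta(k+\gamma)$, will corrupt one of the ``$/3$'' entries or the argument; hence the bulk of the effort is disciplined bookkeeping rather than any new idea, with the vanishing $S(i,n)=0$ of Theorem~\ref{multinvser} guaranteeing throughout that the inverted pair is the right one.
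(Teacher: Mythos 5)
Your proposal follows the paper's proof essentially verbatim: the paper likewise rewrites Pfaff--Saalsch\"utz with the shifted parameters $c-a+[n/2]$ and $a-b+\left[\frac{n+1}{2}\right]$, matches the resulting binomial sum against \eqref{sysF} with $\phi(x;n)=(c-a+x)_n$ and $\psi(x;n)=(a-b-x)_n$, writes down the dual relation \eqref{sysG} (with the same linear factors, e.g.\ $1+c-a+3k$ in the odd sum, and the same pulling out of $(a-b+n)$ that shifts the left side to $a-b+1$), and then converts it termwise into the two ${}_9F_8(1/16)$ series by exactly the duplication and normalisation steps you describe. The approach is the same and there is no gap beyond the bookkeeping that the paper itself also leaves implicit.
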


\section{The Hagen--Rothe's convolution identity} \label{HagenRothe}
In this section, in addition to the duplicate inversions, we make also use of the triplicate inversions corresponding to the case of $\ell = 2$ in the multiplicate inverse relations~\eqref{a}--\eqref{b} of Theorem~\ref{multinvser}.\par
Let $(a_k)$, $(b_k)$, $(c_k)$, $(d_k)$, $(e_k)$, $(f_k)$ ($k\ge 0$) be six sequences of complex numbers such that the polynomials defined by
\begin{align*}
\phi(x;0) := 1\ \quad \text{and}\ \quad \phi(x;n) & := \prod_{k=0}^{n-1} (a_k + xb_k)\ %
\text{for}\ n\in \Z_{>0}\\
\psi(x;0) := 1\ \quad \text{and}\ \quad \psi(x;n) & := \prod_{k=0}^{n-1} (c_k + xd_k)\ %
\text{for}\ n\in \Z_{>0}\\
\chi(x;0) := 1\ \quad \text{and}\ \quad \chi(x;n) & := \prod_{k=0}^{n-1} (e_k + xf_k)\
\text{for}\ n\in \Z_{>0}
\end{align*}
differ from zero for all nonnegative integers $x,\,n$. Then, the relations

\begin{equation} \label{hrsysF}
\fF(n) = \sum_{0\le k\le n} (-1)^k \bin{n}{k}\; \phi\l(k;\l[\frac{n}{3}\r]\r) %
\psi\l(k;\l[\frac{n+1}{3}\r]\r)\chi\l(k;\l[\frac{n+2}{3}\r]\r)\; \fG(k) 
\end{equation}
are equivalent to the system of equations
\begin{align} \label{hrsysG}
\fG(n) & = \sum_{k\ge 0} \bin{n}{3k}\; \frac{e_k+3f_k}{\phi(n;k) \psi(n;k+1) \chi(n;k+1)}\; \fF(3k)\nonumber\\
& -\; \sum_{k\ge 0} \bin{n}{3k+1}\; \frac{c_k+(3k+1)d_k}{\phi(n;k) \psi(n;k+1) \chi(n;k+1)}\; \fF(3k+1)\nonumber\\
& +\; \sum_{k\ge 0} \bin{n}{3k+2}\; \frac{a_k+(3k+2)b_k}{\phi(n;k+1) \psi(n;k+1) \chi(n;k+1)}\; \fF(3k+2).
\end{align}

The following identities for terminating hypergeometric series are derived in the next three subsections by making use of the duplicate, triplicate and multiplicate inversions to the binomial convolution formula due to Haguen and Rothe. This formula is a generalization of the Chu--Vandermonde's convolution identity (see e.g.~\cite[Eq.~2]{Chu10}, \cite{Gould56} and \cite[\S5.4, Eq.~2.62]{GrKP94}).

\begin{equation} \label{eq:hrconv}
\sum_{k\ge 0} \frac{a}{a+bk}\; \bin{a + bk}{k} \bin{c - bk}{n-k} = \bin{a+c}{n}.
\end{equation}

\subsection{First type of reciprocal relations for terminating balanced series} \label{hrreciprel1} \hfill \par
Substitute $c + \l[\frac{n+1}{2}\r]$ for $c$ into Eq.~\eqref{eq:hrconv} and plug the relation into the trinomial revision identity of Theorem~\ref{multinvser}. We have
\[
\bin{c + \l[\frac{n+1}{2}\r] - bk}{n - k} = (-1)^{[n/2]}\; %
\frac{(bk - k -c)_{[n/2]} (c - bk + 1)_{\l[\frac{n+1}{2}\r]}}{(n-1)!\, (c - bk + 1)_k}\,.\]
This yields an alternative formula to Hagen--Rothe's identity in Eq.~\eqref{eq:hrconv},
\[
\sum_{0\le k\le n} (-1)^k \bin{n}{k} (bk - k - c)_{[n/2]} (c - bk + 1)_{\l[\frac{n+1}{2}\r]}\; \omega(k) %
\;=\; (-1)^{[n/2]} \b(a + c + 1 - [n/2]\b)_n\,,\]
where $\omega(k) :=\, \Frac{a}{a + bk - k}\; \frac{(1 - a - bk)_k}{(c - bk + 1)_k}$.

Take $\phi(x;n) = (bx - x - c)_n$ and $\psi(x;n) = (c + 1 - bx)_n$. The latter identity coincides with Eq.~\eqref{sysF} and the dual relation corresponding to~\eqref{sysG} is

\begin{flalign*}
& \omega(n) \;=\; \sum_{k\ge 0} \bin{n}{2k} \frac{(c-2bk+k+1) (a+c+1)_k (-a-c)_{k}}{(bn-n-c)_{k} (c-bn+1)_{k+1}} &\\ 
& \kern4cm +\; \sum_{k\ge 0} \bin{n}{2k+1} \frac{(c-2bk-b+k+1) (a+c+1)_{k+1} (-a-c)_{k}}{(bn-n-c)_{k+1} (c-bn+1)_{k+1}}\,. &
\end{flalign*}

Upon replacing the parameters $a + b$ by $- 1 - a$ and $c + 2 + n/2$ by $c + 1$, the last equality writes in terms of hypergeometric series.

\begin{thm}\label{hrrecirel1} \emph{(Reciprocal relation)}
\begin{flalign*} 
\frac{c + a + bn}{c + a}\; \frac{(c + a)_n}{(c)_n} & \;=\; \frac{c + bn}{c}\; %
{}_5F_4\l[ \begin{matrix} -n/2, & \frac{1-n}{2}, & 1 + \frac{c+bn}{1+2b}, & -a, & a + 1 \\ 
1/2, & \frac{c+bn}{1-2b}, & c + 1, & 1 - c - n \end{matrix}\r] &\\
& =\; \frac{na (bn - b + c)}{c (c - 1 + n)}\; %
{}_5F_4\l[ \begin{matrix} \frac{1-n}{2}, & \frac{2-n}{2}, & 1 + \frac{c+bn-b}{1-2b}, & 1 - a, & a + 1\\ 
3/2, & \frac{c + bn - b}{1 - 2b}, & c + 1, & 2 - c - n \end{matrix}\r]. &
\end{flalign*}
\end{thm}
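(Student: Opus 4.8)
The plan is to read off Theorem~\ref{hrrecirel1} as the hypergeometric form of the \emph{dual} of the alternative Hagen--Rothe identity displayed just above it, following verbatim the pattern of Subsections~\ref{cvgrecirel1}--\ref{cvgrecirel4}: first recast the source identity into the shape of~\eqref{sysF}, then invoke the duplicate ($\ell=1$) inversion of Theorem~\ref{multinvser} to obtain its companion~\eqref{sysG} with no extra work, and finally rewrite the two resulting binomial sums as ${}_5F_4(1)$ series.

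First I would secure the reduction already sketched in the text. Substituting $c\mapsto c+[\tfrac{n+1}{2}]$ in~\eqref{eq:hrconv} and expanding $\binom{c+[\frac{n+1}{2}]-bk}{n-k}$ by the quoted Pochhammer factorisation, then combining with the trinomial revision of Theorem~\ref{multinvser}, turns Hagen--Rothe's convolution into the binomial identity with summand $(-1)^k\binom{n}{k}(bk-k-c)_{[n/2]}(c-bk+1)_{[\frac{n+1}{2}]}\,\omega(k)$ and right-hand side $(-1)^{[n/2]}(a+c+1-[n/2])_n$. Matching this against~\eqref{sysF} fixes the inversion data: $\phi(x;n)=(bx-x-c)_n$ (so $a_k=k-c$, $b_k=b-1$), $\psi(x;n)=(c+1-bx)_n$ (so $c_k=c+1+k$, $d_k=-b$), $\cG(k)=\omega(k)$, and $\cF(n)=(-1)^{[n/2]}(a+c+1-[n/2])_n$. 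Since the source identity \emph{is} an instance of~\eqref{sysF}, Theorem~\ref{multinvser} hands over the dual relation~\eqref{sysG} at once---this is the whole point of the inversion technique. Putting it in closed form requires only the elementary checks $c_k+2kd_k=c-2bk+k+1$ and $a_k+(2k+1)b_k=-(c-2bk-b+k+1)$, together with the evaluations $\cF(2k)=(a+c+1)_k(-a-c)_k$ and $\cF(2k+1)=(a+c+1)_{k+1}(-a-c)_k$, both consequences of the reflection $(-a-c)_k=(-1)^k(a+c-k+1)_k$; this reproduces the displayed formula for $\omega(n)$.

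It remains to translate that sum into hypergeometric notation. The two binomials are converted through the duplication identities
\[
\binom{n}{2k}=\frac{(-n/2)_k\,(\tfrac{1-n}{2})_k}{(\tfrac12)_k\,k!},\qquad
\binom{n}{2k+1}=\frac{n\,(\tfrac{1-n}{2})_k\,(\tfrac{2-n}{2})_k}{(\tfrac32)_k\,k!},
\]
which generate the half-integer parameters $-n/2,\tfrac{1-n}{2},\tfrac12$ and $\tfrac{1-n}{2},\tfrac{2-n}{2},\tfrac32$ and, from the loose factor $n$, the $n$ in the prefactor of the second series. Each linear factor $\alpha+\beta k$ in the numerators is absorbed via $\alpha+\beta k=\alpha\,(1+\alpha/\beta)_k/(\alpha/\beta)_k$; applied to $c-2bk+k+1=(c+1)+(1-2b)k$ and to $c-2bk-b+k+1=(c+1-b)+(1-2b)k$ this produces the matched upper/lower pairs $1+\tfrac{c+bn}{1-2b},\,\tfrac{c+bn}{1-2b}$ and $1+\tfrac{c+bn-b}{1-2b},\,\tfrac{c+bn-b}{1-2b}$ once the shift $c\mapsto c+[\tfrac{n+1}{2}]$ is undone. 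The quotient $(a+c+1)_k(-a-c)_k$ over $(bn-n-c)_k(c-bn+1)_{k+1}$ supplies the surviving numerator parameters (which become $-a,a+1$, resp. $1-a,a+1$, after the cosmetic replacements $a+b\mapsto-1-a$ and $c+2+n/2\mapsto c+1$) and the denominator parameters $c+1,\,1-c-n$ (resp. $2-c-n$). Factoring out the $k=0$ summands gives the prefactors $\tfrac{c+bn}{c}$ and $\tfrac{na(bn-b+c)}{c(c-1+n)}$ and the left-hand side $\tfrac{c+a+bn}{c+a}\,\tfrac{(c+a)_n}{(c)_n}$, completing the two ${}_5F_4$ series.

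The genuine obstacle is this final translation, which is routine in spirit but unforgiving in detail. The delicate points are: verifying that the powers of $4$ coming from the two duplication formulas cancel exactly against those carried by the balanced numerator and denominator, so that both series are evaluated at argument $1$; converting the two linear factors into \emph{matched} parameter pairs with the correct value $\tfrac{c+bn}{1-2b}$ (rather than an unmatched pair); and, above all, controlling the global sign, so that the minus sign separating the two ${}_5F_4$'s in the statement arises correctly from the parameter replacement, even though both sums in the dual relation for $\omega(n)$ carry a plus sign. Everything upstream of the translation is a direct, obstacle-free application of the duplicate inversion.
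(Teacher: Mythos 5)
Your proposal follows the paper's own derivation essentially verbatim: the same substitution $c\mapsto c+[\tfrac{n+1}{2}]$ in Hagen--Rothe's identity, the same identification $\phi(x;n)=(bx-x-c)_n$, $\psi(x;n)=(c+1-bx)_n$ with $\cG(k)=\omega(k)$ and $\cF(n)=(-1)^{[n/2]}(a+c+1-[n/2])_n$ in~\eqref{sysF}, the same appeal to the duplicate inversion to get the dual sum for $\omega(n)$, and the same final rewriting as two ${}_5F_4$ series after the parameter replacements. The only difference is that you spell out the hypergeometric translation (duplication formulas for the binomials, absorption of the linear factors into matched parameter pairs) that the paper leaves implicit, which is a faithful completion rather than a different route.
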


\begin{prop} \label{prop:hrbalid1} \emph{(Terminating balanced series identity)}
\begin{flalign*} \arraycolsep=1.3pt
& {}_5F_4\l[ \begin{matrix} -n/2, & \frac{1-n}{2}, & 1 + \frac{c+bn}{1-2b}, & -a, & a\\ 
1/2, & \frac{c+bn}{1-2b}, & c + 1, & 1 - c - n \end{matrix}\r] &\\
& \kern4cm =\; \frac{c (c+a+bn)}{2 (c+bn) (a+c)}\, \frac{(c + a)_n}{(c)_n} %
\;+\; \frac{c (c-a+bn)}{2(c+bn) (c-a)}\, \frac{(c - a)_n}{(c)_n}\,. &
\end{flalign*}
\end{prop}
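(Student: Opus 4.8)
The plan is to derive the proposition directly from the reciprocal relation of Theorem~\ref{hrrecirel1} by exploiting the symmetry $a \mapsto -a$, exactly as the corollaries of Section~\ref{ChuVanGauss} were obtained by combining two instances of a single reciprocal relation. First I would write Theorem~\ref{hrrecirel1} twice: once as stated, denoting its two ${}_5F_4$ series by $F_1(a)$ and $F_2(a)$ and its closed-form left-hand side by $L(a) = \frac{c+a+bn}{c+a}\frac{(c+a)_n}{(c)_n}$; and once with $a$ replaced by $-a$, which is legitimate since $a$ is a free parameter. Adding the two relations is the whole strategy.

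The crucial observation is that the \emph{second} series $F_2$ is invariant under $a \mapsto -a$: its only $a$-dependent upper parameters form the pair $\{1-a,\,a+1\}$, which is fixed as a set by the substitution, while every other parameter and the (unit) argument is independent of $a$. Moreover the rational prefactor multiplying $F_2(a)$, namely $\frac{na(bn-b+c)}{c(c-1+n)}$, is odd in $a$. Hence, when the relation at $a$ is added to the relation at $-a$, the two $F_2$-contributions are exact negatives of one another and cancel, so the combined right-hand side collapses to $\frac{c+bn}{c}\bigl[F_1(a)+F_1(-a)\bigr]$.

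It remains to combine $F_1(a)$ and $F_1(-a)$. These two series share all parameters and the argument, differing only in their last two upper parameters: the pair $\{-a,\,a+1\}$ for $F_1(a)$ versus $\{a,\,1-a\}$ for $F_1(-a)$. Writing each as $\sum_k C_k\,(\text{varying Pochhammer product})$, where $C_k$ absorbs the common factors, I would reduce the matter to the termwise relation
\[
(-a)_k (a+1)_k + (a)_k (1-a)_k = 2\,(-a)_k (a)_k \qquad (k \ge 0),
\]
which I expect to be the single genuinely computational step. It follows at once from $(a+1)_k = (a)_k\,\frac{a+k}{a}$ and $(1-a)_k = (-a)_k\,\frac{a-k}{a}$, whence the left side equals $(-a)_k(a)_k\cdot\frac{(a+k)+(a-k)}{a} = 2(-a)_k(a)_k$ (and both sides are polynomials in $a$, covering $a=0$ by continuity). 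Consequently $F_1(a)+F_1(-a)$ equals twice the single ${}_5F_4$ on the left of the proposition, whose varying pair is precisely $\{-a,\,a\}$.

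Putting the pieces together, the summed relation reads $L(a)+L(-a) = \frac{2(c+bn)}{c}\,{}_5F_4[\cdots]$, and solving for the ${}_5F_4$ gives exactly the stated right-hand side after inserting $L(a)=\frac{c+a+bn}{c+a}\frac{(c+a)_n}{(c)_n}$ and $L(-a)=\frac{c-a+bn}{c-a}\frac{(c-a)_n}{(c)_n}$. The main obstacle is really bookkeeping rather than depth: one must confirm the even/odd parities that drive the $F_2$-cancellation and the $F_1$-combination, and verify that no spurious sign or prefactor slips in when the series are matched term by term, the displayed Pochhammer identity being the only nontrivial algebraic input.
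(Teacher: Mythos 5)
Your proposal is correct and follows exactly the paper's own (one-line) proof: replace $a$ by $-a$ in Theorem~\ref{hrrecirel1} and add the two relations, the second series cancelling by parity and the first pair merging via $(-a)_k(a+1)_k+(a)_k(1-a)_k=2(-a)_k(a)_k$. You have simply supplied the bookkeeping details that the paper leaves implicit.
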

\begin{proof}
Changing the sign of $a$ in Theorem~\ref{hrrecirel1} and adding the two identities thus obtained, we get the desired proposition on terminating balanced series identity with a free parameter $b$.
\end{proof}
For particular values of $b$, Proposition~\ref{prop:hrbalid1} provides several other identities of interest as special cases. If we let $b = 0$, Corollary~\ref{cor:chuwei1} in Subsection~\ref{cvgrecirel1} is recovered again. Next Corollary~\ref{cor:threecor} exemplifies three corollaries involving ${}_4F_3$-series. They derive from Proposition~\ref{prop:hrbalid1} readily for three specific values of $b$.

\begin{cor} \label{cor:threecor}
\bi
\item[\,]
\item[(i)]\ If we let $b = \frac{2c + 1}{2(1 - n)}$ in Proposition~\ref{prop:hrbalid1}, then
\begin{flalign*} 
& {}_4F_3\l[ \begin{matrix} -n/2, & \frac{1-n}{2}, &-a, &a\\-1/2, & c + 1, & 1 - c - n\end{matrix}\r] &\\
& \kern3cm =\; \frac{c(2c+2a-2na+n)}{2(2c+n)(c+a)}\, \frac{(c+a)_n}{(c)_n} %
\;+\; \frac{c(2c-2a+2na+n)}{2(2c+n)(c-a)}\, \frac{(c-a)_n}{(c)_n}\,.
\end{flalign*}

\item[(ii)]\ If we let $b = \frac{a-c}{2a+n}$ in Proposition~\ref{prop:hrbalid1} (see~\cite[Eq.~5.2a]{ChuWei08}), then
\begin{equation*} 
{}_4F_3\l[ \begin{matrix}-n/2, & \frac{1-n}{2}, & -a, & a + 1\\1/2, & c + 1, & 1 - c - n \end{matrix}\r] %
\;=\; \frac{c}{(2c+n)}\, \frac{(c+a+1)_n + (c-a)_n}{(c)_n}\,.
\end{equation*}

\item[(iii)]\ If we let $b \to 1/2$ in Proposition~\ref{prop:hrbalid1} (see~\cite[Eq.~5.3a]{ChuWei08}), then
\begin{flalign*} 
& {}_4F_3\l[ \begin{matrix} -n/2, & \frac{1-n}{2}, & -a, & a \\ 1/2, & c + 1, & 1 - c - n \end{matrix}\r] &\\
& \kern3cm =\; \frac{c(2c+2a+n)}{2(2c+n)(c+a)}\, \frac{(c+a)_n}{(c)_n} \;+\; \frac{c(2c-2a+n)}{2(2c+n)(c-a)}\, %
\frac{(c-a)_n}{(c)_n}\,.
\end{flalign*}
\ei
\end{cor}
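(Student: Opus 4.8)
The plan is to treat each of the three cases as a direct specialization of Proposition~\ref{prop:hrbalid1}, exploiting that the free parameter $b$ enters the ${}_5F_4$ on the left only through the single quantity $t := \Frac{c + bn}{1 - 2b}$, which occurs as the upper entry $1 + t$ paired with the lower entry $t$. First I would compute $t$ for each prescribed value of $b$. A short calculation should give $t = -1/2$ when $b = \Frac{2c+1}{2(1-n)}$, $t = a$ when $b = \Frac{a-c}{2a+n}$, and $t \to \infty$ as $b \to 1/2$ (where $1 - 2b \to 0$). The whole point is that each of these three values of $t$ forces the paired entries $1+t$ and $t$ to drop out of the series.

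Next I would carry out the collapse of the ${}_5F_4$ to a ${}_4F_3$ in each case. For $t = -1/2$ the new upper entry $1 + t = 1/2$ coincides with the lower entry $1/2$ already present, so the two cancel and leave the ${}_4F_3$ of part~(i), now carrying $-1/2$ as a lower parameter. For $t = a$ the new lower entry $t = a$ coincides with the upper entry $a$ already present, so these cancel and leave the ${}_4F_3$ of part~(ii) with upper row $-n/2, \Frac{1-n}{2}, -a, a+1$. For $b \to 1/2$ the ratio of Pochhammer symbols $(1+t)_k/(t)_k = (t+k)/t \to 1$ termwise as $t \to \infty$, which removes both entries and yields the ${}_4F_3$ of part~(iii).

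In parallel I would simplify the right-hand side by substituting the same $b$ into the two rational prefactors $\Frac{c(c \pm a + bn)}{2(c+bn)(c \pm a)}$ of Proposition~\ref{prop:hrbalid1}. Since each prefactor depends on $b$ only through $c + bn$ and $c \pm a + bn = (c+bn) \pm a$, this reduces to evaluating $c + bn$ once per case and clearing denominators. For part~(ii) one further collapses the sum: after simplification the first coefficient is $\Frac{c(a+c+n)}{(a+c)(2c+n)}$, and the identity $(c+a)_n \cdot \Frac{a+c+n}{a+c} = (c+a+1)_n$ converts $\Frac{(c+a)_n}{(c)_n}$ into $\Frac{(c+a+1)_n}{(c)_n}$, producing the compact form $\Frac{c}{2c+n} \cdot \Frac{(c+a+1)_n + (c-a)_n}{(c)_n}$.

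The main obstacle is the limiting case~(iii): both the series (through $1-2b$ inside $t$) and the prefactors must be passed to the limit $b \to 1/2$, so I would justify the termwise limit of the ${}_5F_4$, legitimate because the sum is finite and terminating, and check that $c + bn \to \Frac{2c+n}{2}$ keeps the prefactors finite, ruling out any hidden indeterminacy. Beyond this, the work is the routine but slightly tedious algebra of verifying the three parameter cancellations on the left and matching the rational coefficients on the right; no new idea is needed once the role of $t$ is isolated.
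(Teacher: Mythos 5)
Your proposal is correct and matches the paper's (essentially unstated) argument: the paper simply asserts that the three identities ``derive from Proposition~\ref{prop:hrbalid1} readily for three specific values of $b$,'' and your computation of $t=\frac{c+bn}{1-2b}$ giving $t=-1/2$, $t=a$, and $t\to\infty$, with the resulting cancellation of the paired entries $1+t$ and $t$ and the corresponding simplification of the prefactors, is exactly the intended verification. The handling of the limiting case $b\to 1/2$ by termwise passage in the finite sum is also the right justification.
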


\begin{prop} \label{prop:hrbalid2} \emph{(Terminating balanced series identity)}
\begin{flalign*} 
& {}_5F_4\l[ \begin{matrix} -n/2, & \frac{1-n}{2}, & 1 + \frac{c+bn}{1-2b}, & -a, & a + 1\\ 
3/2, & \frac{c+bn}{1-2b}, & c + 1, & 1 - c - n \end{matrix}\r] &\\
& \kern2.5cm =\; \frac{c(c+a+bn+b)}{(n+1)(2a+1)(c+bn)} \frac{(c + a + 1)_n}{(c)_n} %
\;-\; \frac{c(c-a-1+bn+b)}{(n+1)(2a+1)(c+bn)} \frac{(c-a)_n}{(c)_n}\,. &
\end{flalign*}
\end{prop}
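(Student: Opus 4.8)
The plan is to obtain Proposition~\ref{prop:hrbalid2} as the \emph{odd} companion of Proposition~\ref{prop:hrbalid1}: both descend from the single reciprocal relation of Theorem~\ref{hrrecirel1}, which writes the ratio $\frac{c+a+bn}{c+a}\frac{(c+a)_n}{(c)_n}$ as the sum of an ``even'' ${}_5F_4$ carrying the lower parameter $1/2$ and an ``odd'' ${}_5F_4$ carrying the lower parameter $3/2$. Proposition~\ref{prop:hrbalid1} was produced by symmetrising ($a\mapsto -a$ and adding), which annihilates the odd series and exposes the even one. Since the present target is the $3/2$-series, I must instead annihilate the \emph{even} series, and the device is to locate a reflection of $a$ fixing it: the even series has upper $a$-parameters forming the pair $\{-a,\,a+1\}$, and the involution $a\mapsto -1-a$ interchanges $-a\leftrightarrow a+1$, hence fixes the even series while its $a$-free prefactor $\frac{c+bn}{c}$ is untouched.

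First I would shift $n\mapsto n+1$ in Theorem~\ref{hrrecirel1}. This turns the odd series' parameters into $-n/2,\ \frac{1-n}{2},\ 1+\frac{c+bn}{1-2b}$ on top and $3/2,\ \frac{c+bn}{1-2b},\ c+1,\ 1-c-n$ below, i.e.\ exactly the shape of the series in the Proposition, except that its upper pair is $\{1-a,\,a+1\}$ rather than the desired $\{-a,\,a+1\}$; simultaneously the left-hand ratio becomes $L_{n+1}(a):=\frac{(c+a+bn+b)(c+a+1)_n}{(c+n)(c)_n}$ and the odd prefactor becomes $\frac{(n+1)a(c+bn)}{c(c+n)}$. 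Next I would form the difference of this shifted identity and its $a\mapsto -1-a$ reflection. Because the even series is invariant under $a\mapsto -1-a$ and its prefactor is $a$-free, the even contributions cancel exactly, leaving only a combination of the odd series evaluated at $a$ (upper pair $\{1-a,\,a+1\}$) and at $-1-a$ (upper pair $\{2+a,\,-a\}$), set equal to the closed-form difference $L_{n+1}(a)-L_{n+1}(-1-a)$.

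The final step merges the two odd series into one. All their parameters coincide except the upper pair, so they may be combined termwise; with the odd prefactors $\frac{(n+1)a(c+bn)}{c(c+n)}$ and $\frac{(n+1)(-1-a)(c+bn)}{c(c+n)}$ in front, the $k$-th summands collapse through the Pochhammer identity
\[
a\,(1-a)_k(a+1)_k + (1+a)\,(2+a)_k(-a)_k = (2a+1)\,(-a)_k(a+1)_k ,
\]
which I would prove by writing $(1-a)_k,(a+1)_k,(2+a)_k,(-a)_k$ in terms of $(-a)_{k+1}$ and $(a)_{k+1}$ and collecting. This replaces the two series by the single ${}_5F_4$ with upper pair $\{-a,\,a+1\}$ appearing in the Proposition and produces exactly the factor $(2a+1)$ seen in the denominator. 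On the closed-form side, using $(c+p)_{n+1}=(c+p)(c+p+1)_n$ and $(c)_{n+1}=(c+n)(c)_n$ to rewrite $L_{n+1}(a)$ and $L_{n+1}(-1-a)$, the common factor $c+n$ cancels and the difference resolves into the stated combination of $\frac{(c+a+1)_n}{(c)_n}$ and $\frac{(c-a)_n}{(c)_n}$.

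The main obstacle is this termwise merging: one must check that the non-$a$ parts of the two odd series are genuinely identical after the substitution $a\mapsto -1-a$ (so that a termwise combination is legitimate), and then verify the Pochhammer collapse above. The remaining difficulty is purely clerical---tracking the rational prefactor $\frac{(n+1)a(c+bn)}{c(c+n)}$ under $a\mapsto -1-a$ and through the cancellation of $c+n$, so that $(n+1)$, $(2a+1)$, $c+bn$ and the two numerators $c(c+a+bn+b)$ and $c(c-a-1+bn+b)$ emerge with the correct signs.
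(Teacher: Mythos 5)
Your proposal is correct and follows essentially the same route as the paper, whose proof consists of the single instruction to combine Theorem~\ref{hrrecirel1} with its variant under $a\mapsto -1-a$ and shift $n$ to $n+1$; you have simply supplied the details (that the combination must be a \emph{difference} so that the $a\mapsto-1-a$-invariant even series cancels, and the termwise Pochhammer collapse $a\,(1-a)_k(a+1)_k+(1+a)\,(2+a)_k(-a)_k=(2a+1)\,(-a)_k(a+1)_k$ that merges the two odd series), all of which check out.
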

\begin{proof}
Combine Theorem~\ref{hrrecirel1} with its variant under the parameter replacement that changes $a$ into $- 1 - a$ and then shift $n$ to $n + 1$. This gives yet another terminating balanced series identity where $b$ remains a free parameter, which completes the proof of the proposition.
\end{proof}
Proposition~\ref{prop:hrbalid2} is an extension of several known identities. If we let $b = 0$, Corollary~\ref{cor:chuwei2} in Subsection~\ref{cvgrecirel1} is recovered again. Similarly, the next Corollary~\ref{cor:twocor} displays two identities on ${}_4F_3$-series which can be established for two other particular values of $b$.

\begin{cor} \label{cor:twocor} 
\bi
\item[\,]
\item[(i)]\ If we let $b = \frac{a-c+1}{2a+n+2}$ in Proposition~\ref{prop:hrbalid2}) (see~\cite[Eq.~5.2b]{ChuWei08}), then
\begin{equation*} \arraycolsep=1.5pt
{}_4F_3\l[ \begin{matrix}-n/2, & \frac{1-n}{2}, & -a, & 2 + a\\3/2, & c + 1, & 1 - c - n \end{matrix}\r] %
\;=\; \frac{c}{2c+n)}\; \frac{(c+a+1)_{n+1} - (c-a-1)_{n+1}}{(n+1) (a+1) (c)_n}\,.
\end{equation*}

\item[(ii)]\ If we let $b \to 1/2$ in Proposition~\ref{prop:hrbalid2}) (see~\cite[Eq.~5.3b]{ChuWei08}), then
\begin{flalign*} 
& {}_4F_3\l[ \begin{matrix} -n/2, & \frac{1-n}{2}, & -a, & a + 1\\ 3/2, & c + 1, & 1 - c - n\end{matrix}\r] &\\
& \kern2cm =\; \frac{c (2c+2a+n+1)}{(n+1) (2a+1) (2c+n)} \frac{(c+a+1)_n}{(c)_n} %
\;-\; \frac{c (2c-2a+n-1)}{(n+1) (2a+1) (2c+n)} \frac{(c-a)_n}{(c)_n}\,.
\end{flalign*}
\ei
\end{cor}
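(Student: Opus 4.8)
The plan is to obtain both identities directly from Proposition~\ref{prop:hrbalid2} by specializing the free parameter $b$, exactly as announced; the only work is to track how the single ``extra'' parameter pair of the ${}_5F_4$-series collapses and then to simplify the resulting prefactors. Throughout I write $\theta := \frac{c+bn}{1-2b}$ for the quantity appearing both as $1+\theta$ in the numerator and as $\theta$ in the denominator of the series on the left of Proposition~\ref{prop:hrbalid2}. Because every summation here is terminating, all the manipulations below are finite-sum identities and may be carried out term by term.

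For part (i) I would first choose $b$ so that $\theta=a+1$. Solving $\frac{c+bn}{1-2b}=a+1$ yields precisely $b=\frac{a-c+1}{2a+n+2}$. With this choice the $k$-th summand carries the factor $\frac{(1+\theta)_k}{(\theta)_k}\,(a+1)_k=\frac{(a+2)_k}{(a+1)_k}(a+1)_k=(a+2)_k$, using $(a+1)_k(a+1+k)=(a+1)_{k+1}=(a+1)(a+2)_k$. Hence the $\theta$-pair disappears and the upper entry $a+1$ is promoted to $2+a$, turning the ${}_5F_4$ into the asserted ${}_4F_3\l[-n/2,\frac{1-n}{2},-a,2+a;\,3/2,c+1,1-c-n\r]$. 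It then remains to substitute $b=\frac{a-c+1}{2a+n+2}$ into the right-hand side. A short computation gives $c+bn=\frac{(a+1)(2c+n)}{2a+n+2}$, $\;c+a+b(n+1)=\frac{(2a+1)(c+a+1+n)}{2a+n+2}$ and $c-a-1+b(n+1)=\frac{(2a+1)(c-a-1)}{2a+n+2}$; dividing the latter two by $c+bn$ clears the factors $2a+n+2$ and $2a+1$, so the two prefactors become $\frac{c(c+a+1+n)}{(n+1)(a+1)(2c+n)}$ and $-\frac{c(c-a-1)}{(n+1)(a+1)(2c+n)}$. Absorbing $c+a+1+n$ and $c-a-1$ via $(c+a+1)_n(c+a+1+n)=(c+a+1)_{n+1}$ and $(c-a-1)(c-a)_n=(c-a-1)_{n+1}$ collapses the two terms into the single quotient displayed in (i).

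Part (ii) is the limiting case $b\to 1/2$. Here $1-2b\to 0$ forces $\theta\to\infty$, and since $\frac{(1+\theta)_k}{(\theta)_k}=\frac{\theta+k}{\theta}\to 1$ over the finitely many indices $k$, the $\theta$-pair simply evaporates and the ${}_5F_4$ reduces to the stated ${}_4F_3$ with the same remaining entries. Setting $b=1/2$ in the two coefficients of Proposition~\ref{prop:hrbalid2} gives $c+bn=\frac{2c+n}{2}$, $\;c+a+bn+b=\frac{2c+2a+n+1}{2}$ and $c-a-1+bn+b=\frac{2c-2a+n-1}{2}$; the factors of $2$ cancel against $c+bn$, producing exactly the coefficients $\frac{c(2c+2a+n+1)}{(n+1)(2a+1)(2c+n)}$ and $-\frac{c(2c-2a+n-1)}{(n+1)(2a+1)(2c+n)}$ of the two balanced quotients in (ii).

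I do not anticipate a genuine obstacle, since both statements are term-by-term specializations of an already-proved proposition. The only point requiring a little care is the passage $b\to 1/2$ in part (ii), where the individual parameters $1+\theta$ and $\theta$ blow up: this is harmless because their quotient tends to $1$ and the sum is finite, so the limit may be taken inside the terminating series. The rest is the same routine Pochhammer bookkeeping already used to pass from the dual relation to Proposition~\ref{prop:hrbalid2}.
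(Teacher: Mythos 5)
Your proposal is correct and follows exactly the route the paper intends: Corollary~\ref{cor:twocor} is presented there simply as the specialization of Proposition~\ref{prop:hrbalid2} at $b=\frac{a-c+1}{2a+n+2}$ (which makes $\frac{c+bn}{1-2b}=a+1$ so the extra parameter pair cancels against the upper entry $a+1$) and at the limit $b\to 1/2$ (where the pair $1+\theta,\theta$ evaporates termwise), and your Pochhammer bookkeeping for the prefactors checks out. No gaps.
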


\subsection{Second type of reciprocal relations for terminating balanced series} \label{hrreciprel2} \hfill \par
Substitute $c + \l[\frac{2n+2}{3}\r]$ for $c$ in Eq.~\eqref{eq:hrconv} and, again, apply the trinomial revision relation. Therefore,

\begin{flalign*}
\bin{c + \l[\frac{2n+2}{3}\r] - bk}{n - k} & =\; (-1)^{[n/2]}\, 2^{\l[\frac{2n+2}{3}\r]}\; %
\frac{(bk - k - c)_{[n/3]}}{(n-k)!\, (c - bk + 1)_k} &\\
& \times \l(\frac{c - bk + 2}{2}\r)_{\l[\frac{n+1}{3}\r]}\, \l(\frac{c - bk + 1}{2}\r)_{\l[\frac{n+2}{3}\r]}\,.
\end{flalign*}
Consequently, the Hagen--Rothe's formula can then express as

\begin{align*}
& \sum_{0\le k\le n} (-1)^k \bin{n}{k} (b k - k - c)_{[n/3]} %
\l(\frac{c - bk + 2}{2}\r)_{\l[\frac{n+1}{3}\r]}\, \l(\frac{c - bk + 1}{2}\r)_{\l[\frac{n+2}{3}\r]}\; \omega(k) &\\ 
& \kern3cm =\; (-1)^{[n/2]}\, 2^{-\l[\frac{2n+2}{3}\r]}\; \b(a + c + 1 - [n/3]\b)_n. &
\end{align*}

Specify now $\phi$, $\psi$ and $\chi$ to $\phi(x;n) := (bx - x - c)_n$, $\psi(x;n) := \l(\frac{1}{2}(c + 2 - bx)\r)_n$ and $\chi(x;n) := \l(\frac{1}{2}(c - bx + 1 )\r)_n$, respectively. The dual equation corresponding to Eq.~\eqref{hrsysF}-Eq.~\eqref{hrsysG} is
\begin{align*}
\omega(n) & =\; \sum_{k\ge 0} \bin{n}{3k}\; 2^{-2k}\; \frac{\l(\frac{c + 1 - 3bk}{2} + k\r) %
(a + c + 1 - k)_{3k}}{(bn - n - c)_{k} \l(\frac{1}{2} (c + 2 - bn)\r)_{k} \l(\frac{1}{2} (c + 1 - bn )\r)_{k + 1}}\\ 
& -\; \sum_{k\ge 0} \bin{n}{3k+1}\; 2^{-(2k+1)}\; \frac{\l(\frac{c - b + 2 - 3bk}{2} + k\r) %
(a + c + 1 - k)_{3k + 1}}{(bn - n - c)_{k} \l(\frac{c + 1 - bn}{2}\r)_{k+1} \l(\frac{c + 2 - bn}{2}\r)_{k+1}}\\
& +\; \sum_{k\ge 0} \bin{n}{3k+2}\; 2^{-(2k+2)}\; \frac{(3bk + 2b - c - 2 - 2k) (a + c + 1 - k)_{3k+2}} %
{(bn - n - c)_{k+1} \l(\frac{c + 1 - bn}{2}\r)_{k + 1} \l(\frac{c + 2 - bn}{2}\r)_{k+1}}\,.
\end{align*}

Changing $a + c$ into $-1 -a$ and $c + 2 - bn$ into $c + 1$ yields the next reformulation in terms of ${}_7F_6$-series, neither of which can be factorized into a closed form.

\begin{thm}\label{hrrecirel2} \emph{(Reciprocal relation)}
\begin{flalign*}
\frac{c + a + bn}{c + a}\; \frac{(c + a)_n}{(c)_n} & =\; \frac{c + bn}{c}\; \arraycolsep=1.2pt %
{}_7F_6\l[ \begin{matrix} -n/3, & \frac{1-n}{3}, & \frac{2-n}{3}, & 1+\frac{c+bn}{2-3b}, & -a/2, & \frac{1-a}{2}, & a + 1 \\ 
1/3, & 2/3, & \frac{c+bn}{2-3b}, & \frac{c+1}{2}, & \frac{c+2}{2}, & 1 - c - n \end{matrix}\r] &\\
& \kern-2.5cm +\; \frac{na (c + 1 + bn - b)}{c (c + 1)}\; \arraycolsep=1.2pt %
{}_7F_6\l[ \begin{matrix} \frac{1-n}{3}, & \frac{2-n}{3}, & \frac{3-n}{3}, & 1 + \frac{c + 1 + bn - b}{2-3b}, %
& \frac{1 - a}{2}, & \frac{2 - a}{2}, & a + 1\\ 
2/3, & 4/3, & \frac{c + 1 + bn - b}{2-3b}, & \frac{c+2}{2}, & \frac{c+3}{2}, & 1 - c - n \end{matrix}\r] &\\
& \kern-2.8cm +\; \frac{na (n - 1) ( a - 1) (c + 1 + bn - 2b)}{2c(c + 1)(c - 1 + n)}\; %
{}_7F_6\l[ \begin{matrix} \frac{2-n}{2}, \frac{3-n}{2}, \frac{4-n}{2}, 1 + \frac{c + 1 + bn - 2b}{2-3b}, %
\frac{2-a}{2}, \frac{3-a}{2}, a + 1 \\ 4/3, \ 5/3, \ \frac{c + 1 + bn - 2b}{2-3b}, %
\ \frac{c+2}{2}, \ \frac{c+3}{2}, \ 2 - c - n \end{matrix}\r]. &
\end{flalign*}
\end{thm}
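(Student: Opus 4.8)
The plan is to derive Theorem~\ref{hrrecirel2} by feeding a rewritten instance of the Hagen--Rothe convolution identity~\eqref{eq:hrconv} into the triplicate inversions, that is, the case $\ell = 2$ of Theorem~\ref{multinvser} in the explicit form of the pair~\eqref{hrsysF}--\eqref{hrsysG}. Since the inversion machinery is already established, the argument is in essence a verification followed by a translation into hypergeometric notation.

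First I would justify the binomial rewriting displayed just above the theorem. Substituting $c + [\frac{2n+2}{3}]$ for $c$ in~\eqref{eq:hrconv} and writing $\binom{c + [\frac{2n+2}{3}] - bk}{n-k}$ as a ratio of Pochhammer symbols, I would split the resulting rising factorial into the three blocks of lengths $[n/3]$, $[\frac{n+1}{3}]$, $[\frac{n+2}{3}]$ (whose sum is $n$), using a duplication-type splitting $(x)_{2m} = 2^{2m}(\frac{x}{2})_m(\frac{x+1}{2})_m$ for the two half-spaced blocks and keeping careful track of the floor functions across the residue classes $n \equiv 0,1,2 \pmod{3}$ (note that $[\frac{2n+2}{3}] = [\frac{n+1}{3}] + [\frac{n+2}{3}]$ in every class, which produces the power $2^{[\frac{2n+2}{3}]}$). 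This yields the stated alternative form of Hagen--Rothe's formula carrying the weight $\omega(k) = \Frac{a}{a+bk-k}\frac{(1-a-bk)_k}{(c-bk+1)_k}$.

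Then, recognizing the left-hand side as an instance of~\eqref{hrsysF} under the specialization $\phi(x;n) = (bx-x-c)_n$, $\psi(x;n) = (\frac{1}{2}(c+2-bx))_n$ and $\chi(x;n) = (\frac{1}{2}(c-bx+1))_n$, with $\fF(n)$ the right-hand side and $\fG(n) = \omega(n)$, I would invoke Theorem~\ref{multinvser} to assert that the dual relation~\eqref{hrsysG} holds. Substituting these $\phi,\psi,\chi$ into the three binomial sums of~\eqref{hrsysG} and evaluating the linear factors $e_k+3f_k$, $c_k+(3k+1)d_k$, $a_k+(3k+2)b_k$ for this choice yields exactly the three-term expression for $\omega(n)$ displayed before the theorem, with its powers $2^{-2k}$, $2^{-(2k+1)}$, $2^{-(2k+2)}$.

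It remains to convert the three sums into the ${}_7F_6$-series of the statement. Here the triplication of the binomial coefficients $\binom{n}{3k+j}$ supplies the numerator parameters $-n/3,\frac{1-n}{3},\frac{2-n}{3}$ (shifted by one for each successive $j$) over the denominator parameters $1/3,2/3$ (respectively $2/3,4/3$ and $4/3,5/3$); each linear factor such as $\frac{c+1-3bk}{2}+k$ becomes a ratio $\frac{(\theta+1)_k}{(\theta)_k}$ with $\theta = \frac{c+1}{2-3b}$, producing the characteristic pair $1+\frac{\,\cdot\,}{2-3b}$ over $\frac{\,\cdot\,}{2-3b}$; and the duplication applied to the factors $(-a)_{2k}$, $(-a)_{2k+1}$, $(-a)_{2k+2}$ arising (after the renaming) from the rising factorials $(a+c+1-k)_{3k+j}$ contributes compensating powers $2^{2k}$ that cancel the explicit $2^{-2k}$ etc., while the signs supplied by the triplicated binomials and by $(-a-k)_k = (-1)^k(a+1)_k$ combine, so that all three series are evaluated at the argument $1$. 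Finally the renaming $a+c \mapsto -1-a$ and $c+2-bn \mapsto c+1$ brings the parameters to their displayed form, the prefactors $\frac{c+bn}{c}$, $\frac{na(c+1+bn-b)}{c(c+1)}$ and $\frac{na(n-1)(a-1)(c+1+bn-2b)}{2c(c+1)(c-1+n)}$ emerging from the $k=0$ normalization of the three series (the factor $a(a-1)$ in the last prefactor being precisely the leading $(-a)(1-a)$ of $(-a)_{2k+2}$). I expect the main obstacle to be exactly this last translation: matching each shifted Pochhammer against the upper and lower parameter lists, verifying that the three arguments all collapse to $1$ and that the three prefactors come out as stated requires meticulous bookkeeping, and the floor-function case analysis modulo $3$ must be checked to be consistent with the single uniform statement.
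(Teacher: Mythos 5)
Your proposal follows the paper's proof essentially step for step: the same substitution $c\mapsto c+\l[\frac{2n+2}{3}\r]$ in the Hagen--Rothe identity, the same rewriting of the binomial coefficient into the three Pochhammer blocks of lengths $[n/3]$, $\l[\frac{n+1}{3}\r]$, $\l[\frac{n+2}{3}\r]$ with the power $2^{\l[\frac{2n+2}{3}\r]}$, the same specialization $\phi(x;n)=(bx-x-c)_n$, $\psi(x;n)=\l(\frac{1}{2}(c+2-bx)\r)_n$, $\chi(x;n)=\l(\frac{1}{2}(c-bx+1)\r)_n$ fed into the triplicate pair~\eqref{hrsysF}--\eqref{hrsysG}, and the same final reparametrization $a+c\mapsto -1-a$, $c+2-bn\mapsto c+1$ to reach the three ${}_7F_6$-series. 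The only difference is cosmetic (you justify the Pochhammer splitting by a duplication formula where the paper invokes ``trinomial revision''), so the proposal is correct and matches the paper's argument.
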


\subsection{Third type of reciprocal relations for terminating balanced series} \label{hrreciprel3} \hfill \par
More generally, by replacing $c$ by $c + \l[\frac{(n+1)\ell}{2}\r]$ for any nonnegative integer $\ell$ in Eq.~\eqref{eq:hrconv} and making use of the trinomial revision identity, we obtain
\begin{flalign*}
\bin{c + \l[\frac{n+1}{2}\r] - b k}{n - k} & =\; \ell^{\l[\frac{\ell}{\ell+1}(n+1)\r]}\; %
\frac{(bk - k - c)_{\l[\frac{n}{\ell+1}\r]}}{(n-k)!\, (c + 1 - bk)_k} &\\
& \quad \times \; (-1)^{\l[\frac{n}{\ell+1}\r]}\; \prod_{1\le j\le \ell} %
\l(\frac{j+c-b k}{\ell}\r)_{\l[\frac{n+1+\ell-j}{\ell+1}\r]}.
\end{flalign*}
Therefore, the other next alternative to the Hagen--Rothe's formula in Eq.~\eqref{eq:hrconv} is

\begin{flalign*}
\sum_{0\le k\le n} (-1)^k \bin{n}{k} (bk - k - c)_{\l[\frac{n}{\ell+1}\r]}\; & %
\prod_{1\le j\le \ell} \l(\frac{j + c - bk}{\ell}\r)_{\l[\frac{n + 1+ \ell - j}{\ell+1}\r]}\; \omega(k) &\\
& =\; (-1)^{\l[\frac{n}{\ell+1}\r]}\; \ell^{-\l[\frac{\ell}{\ell+1}(n+1)\r]}\; %
\l(a + c + 1 -\l[\frac{n}{\ell+1}\r]\r)_n. &
\end{flalign*}

Specify now $\phi_0(x;n)$ to $(bx - x - c)_n$ and $\phi_j(x;n)$ to $\l(\Frac{\ell - j + c - bx + 1}{\ell}\r)_n$ with $1\le j\le \ell$, the identity coincides with Eq.~\eqref{sysF}; and the dual relation corresponding to~\eqref{sysG} is

\begin{flalign*}
\omega(n) & =\; \sum_{k\ge 0} \bin{n}{(\ell+1)k + \ell}\, \frac{(-1)^{k\ell+\ell}}{\ell^{k\ell+\ell}}\, %
\frac{(a + c + 1 - k)_{(\ell+1)k+\ell}}{(bn - n - c)_{k+1}}\, \frac{k + (b-1)\b((\ell+1)k + \ell\b) - c} %
{\Prod_{1\le j\le \ell} \l(\frac{\ell - j + c - bn + 1}{\ell}\r)_{k+1}} &\\
& -\; \sum_{i=1}^\ell \sum_{k\ge 0} \bin{n}{(\ell+1)k + i - 1}\; %
\frac{(a + c + 1 - k)_{(\ell+1)k + i - 1}}{(bn - n - c)_{k}} &\\
& \kern4.5cm \times \; \frac{(-1)^{k\ell+i}}{\ell^{k\ell+\l[\frac{i\ell}{\ell+1}\r]+1}}\; %
\frac{k\ell + i + c - b\b((\ell+1)k + i - 1\b)} %
{\Prod_{1\le j\le \ell} \l(\frac{\ell - j + c - bn + 1}{\ell}\r)_{k+\l[\frac{i+j}{\ell+1}\r]}}\,. &
\end{flalign*}

Finally, change again the parameters $a + c$ into $- 1 -a$ and $c - bn + 2$ into $c + 1$. After a few simplification (e.g. such as $(1)_\ell = \ell !$ or $(-x)_\ell = (\ell - x - 1)^{\und{\ell}}$) the equality writes in terms of hypergeometric series as

\begin{thm} \label{hrrecirel3} \emph{(Reciprocal relation)}
\begin{flalign*} 
\frac{c + a + bn}{c + a}\, \frac{(c + a)_n}{(c)_n} & \;=\; \frac{(-n)_{\ell} (-a)_{\ell}}{\ell!\, (1 - c - n)}\; %
\frac{(b - 1)\ell - c - bn + 1}{\Prod_{1\le j\le \ell} (\ell - j + c)}\; \times &\\
& \kern-2cm \arraycolsep=2.2pt {}_{2\ell+3}F_{2\ell+2}\l[ \begin{matrix} a + 1, & 1 + \frac{c + bn - (b-1)\ell - 1} %
{\ell -b(\ell+1)}, & \l\{\frac{\ell + j - n}{\ell+1}\r\}_{j=0}^\ell, & \l\{\frac{\ell + j - a - 1}{\ell}\r\}_{j=1}^\ell\\ 
2 - c - n, & \frac{c + bn - (b-1)\ell - 1}{\ell - b(\ell+1)}, %
& \l\{\frac{\ell+j+1}{\ell+1}\r\}_{j=1}^\ell, & \l\{\frac{2\ell-j+c}{\ell}\r\}_{j=1}^\ell \end{matrix}\r] &\\
& \kern-3cm +\; \sum_{i=1}^\ell \frac{(-n)_{i-1}}{(i-1)!}\; \frac{(-a)_{i-1} \b(c + bn - (b - 1)(i - 1)\b)} %
{\Prod_{1\le j\le \ell}\, \Prod_{1\le k\le \l[\frac{i+j}{\ell+1}\r]} (k\ell - j + c)}\; \times &\\
& \kern-2cm {}_{2\ell+4}F_{2\ell+3}\l[ \begin{matrix} 1, \ a + 1, \ 1 + \frac{c + bn - (b-1)(i-1)} {\ell - b(\ell+1)}, %
\ \l\{\frac{i + j - n - 1}{\ell+1}\r\}_{j=0}^\ell, \ \l\{\frac{i+j-a-2}{\ell}\r\}_{j=1}^\ell \\ 
1 - c - n, \quad \frac{c + bn - (b-1)(i-1)}{\ell-b (\ell+1)}, \qquad \l\{\frac{i+j}{\ell+1}\r\}_{j=0}^\ell, %
\qquad \l\{\frac{\ell - j + c}{\ell} + \l[\frac{i+j}{\ell+1}\r]\r\}_{j=1}^\ell \end{matrix}\r]. &
\end{flalign*}
\end{thm}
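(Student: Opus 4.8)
The plan is to run, for general $\ell$, the same pipeline already used for Theorems~\ref{hrrecirel1} and~\ref{hrrecirel2}, now feeding the $(\ell+1)$-fold inversion of Theorem~\ref{multinvser} with the alternative form of the Hagen--Rothe identity derived just above. First I would read the displayed binomial sum as an instance of the direct relation~\eqref{a}: set $G(k)=\omega(k)$, set $F(n)$ equal to the right-hand side $(-1)^{[n/(\ell+1)]}\,\ell^{-[\ell(n+1)/(\ell+1)]}\,(a+c+1-[n/(\ell+1)])_n$, and recognise the product of floor-shifted Pochhammer symbols as $\Phi(k;n)=\prod_{i=0}^{\ell}\phi_i\big(k;[(i+n)/(\ell+1)]\big)$ with $\phi_0(x;n)=(bx-x-c)_n$ and $\phi_j(x;n)=\big((\ell-j+c-bx+1)/\ell\big)_n$ for $1\le j\le\ell$ (the product index in the binomial sum matching $\phi_j$ after the relabelling $j\mapsto\ell-j+1$). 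Since these $\phi_i$ are nonzero at all nonnegative integer arguments for generic $a,b,c$, the hypotheses of Theorem~\ref{multinvser} are met.

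Next I would invoke the two-way inversion~\eqref{a} $\rla$~\eqref{b} to obtain $\omega(n)=G(n)=\sum_k(-1)^k\bin{n}{k}\lambda(k)F(k)/\Phi(n;k+1)$, where $\lambda(k)$ is the linear factor of the appropriate class evaluated at $x=k$. The decisive bookkeeping step is to split this single sum according to the residue of $k$ modulo $\ell+1$: writing $k=(\ell+1)m+r$ with $0\le r\le\ell$ breaks it into $\ell+1$ subseries, and the block structure of $\Phi(n;k+1)$, together with the prescription defining $\lambda(k)$, produces exactly the displayed dual relation, with the residue $r=\ell$ class carrying the weight $(-1)^{k\ell+\ell}\ell^{-(k\ell+\ell)}$ and the residues $r=i-1$, $i=1,\dots,\ell$, assembled into the sum over $i$. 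The floor exponents $[(i+j)/(\ell+1)]$ governing the lengths of the denominator Pochhammer symbols must be tracked exactly here, as they fix the lower indices of every factor coming from the $\phi_j(n;\cdot)$.

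With the dual relation in hand I would apply the two indicated substitutions, $a+c\mapsto-1-a$ and $c-bn+2\mapsto c+1$, and then convert each subseries to generalized hypergeometric form by computing the ratio $t_{m+1}/t_m$ of consecutive summands in the subseries variable $m$ and reading off its numerator and denominator linear factors. Two effects drive the parameter count. The binomial coefficient $\bin{n}{(\ell+1)m+r}$, expanded against the matching factorial through the Gauss multiplication formula $(-n)_{(\ell+1)m}=(\ell+1)^{(\ell+1)m}\prod_{j=0}^{\ell}\big((j-n)/(\ell+1)\big)_m$, yields the step-$(\ell+1)$ upper family $\{(\ell+j-n)/(\ell+1)\}_{j=0}^{\ell}$ against the lower family $\{(\ell+j+1)/(\ell+1)\}$, with all $(\ell+1)$-powers cancelling. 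The $F$-factor $(a+c+1-m)_{(\ell+1)m+r}$, which the first substitution turns into $(-a-m)_{(\ell+1)m+r}$, has a term-ratio whose numerator is a product of $\ell$ linear factors in $m$, producing the step-$\ell$ upper family $\{(\ell+j-a-1)/\ell\}_{j=1}^{\ell}$ together with a per-step power $\ell^{\ell}$, while the denominator Pochhammer symbols $\phi_j(n;\cdot)$ contribute the lower family $\{(2\ell-j+c)/\ell\}$; the remaining entries, among them the near-poised pair built from the single $b$-dependent linear factor, are read off the same way.

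The main obstacle is the uniform-in-$\ell$ accounting of the powers of $\ell$ and $\ell+1$. The delicate cancellation is that the per-step factor $\ell^{\ell}$ thrown off by the term-ratio of $(-a-m)_{(\ell+1)m+r}$ must annihilate the per-step factor $\ell^{-\ell}$ coming from the normalisation $\ell^{-[\ell(k+1)/(\ell+1)]}$ that $F(k)$ carries into each class weight, so that both series end up at the unit argument displayed rather than at some $(\cdots)^m$; the binomial's $(\ell+1)$-powers must likewise cancel internally. I would control all of this by the term-ratio method: once $t_{m+1}/t_m$ matches the hypergeometric ratio dictated by the claimed parameter lists and the leading ($m=0$) terms agree with the scalar prefactors, the identification of the ${}_{2\ell+3}F_{2\ell+2}$ and ${}_{2\ell+4}F_{2\ell+3}$ series is forced. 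Specialising to $\ell=1$ and $\ell=2$ must reproduce Theorems~\ref{hrrecirel1} and~\ref{hrrecirel2}, giving a built-in check on the placement of every floor shift and every power of $\ell$.
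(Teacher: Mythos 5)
Your plan follows the paper's own derivation essentially step for step: the paper likewise substitutes the shifted $c$ into the Hagen--Rothe identity, uses trinomial revision to expose the product $\prod_{j}\bigl((j+c-bk)/\ell\bigr)_{[\cdot]}$, specifies $\phi_0(x;n)=(bx-x-c)_n$ and $\phi_j(x;n)=\bigl((\ell-j+c-bx+1)/\ell\bigr)_n$, invokes Theorem~\ref{multinvser} to write the dual relation split by residue classes modulo $\ell+1$, performs the substitutions $a+c\mapsto-1-a$ and $c-bn+2\mapsto c+1$, and reads off the hypergeometric parameters. Your additional remarks on tracking the powers of $\ell$ and $\ell+1$ via term ratios and checking against the $\ell=1,2$ cases are consistent with, and slightly more explicit than, what the paper records.
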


\section{Perspective}
Following the same lines as in the paper, the triplicate and multiplicate inversions performed from the Chu--Vandermonde--Gau\ss's, the Pfaff--Saalsch\"utz's and the Hagen--Rothe's summation formulae can also enable to produce still more reciprocal relations on terminating hypergeometric series. Such new identities will take more and more involved forms, e.g. especially space consuming. However, generalizing to any value of the parameters---such as $\ell$ in Theorem~\ref{hrrecirel3}---, should enhance the relevance of the multiplicate inversions of the ``third approach'' and enlarge the viewpoint of the present purpose.

\bibliographystyle{article}
\def\bibfmta#1#2#3#4{ {\sc #1}, {#2}, \emph{#3} #4.}
\bibliographystyle{book}

\begin{thebibliography}{99}
\bibliographystyle{plain}
%
\bibitem{Bailey35}\bibfmtb
{Bailey W.N.}
{Generalized Hypergeometric Series}{Cambridge University Press}{1935}
%
\bibitem{ChenChu13}\bibfmta
{Chen X., Chu W.}
{Further ${}_3F_2(4/3)$-series via Gould--Hsu inversions}
{Int. Transf. and Special Functions}{{\bf 24:6} (2013), 441--469}
%
%
\bibitem{Chu93}\bibfmta
{Chu W.}
{Inversion techniques and combinatorial identities: strange evaluations of hypergeometric series}
{Pure \& Applied Math.}{{\bf 4:4} (1993), 409--428}
%
\bibitem{Chu94}\bibfmta
{Chu W.}
{Inversion techniques and combinatorial identities: a quick introduction to hypergeometric evaluations}
{Math. Appl.}{{\bf 283}, Kluwer Acad. Publ. (1994), 31--57}
%
%
\bibitem{Chu02}\bibfmta
{Chu W.}
{Inversion techniques and combinatorial identities: balanced hypergeometrixc series}
{Rocky Mountains J. Math.}{{\bf 32:2} (2002), 561--587}
%
\bibitem{Chu13}\bibfmta
{Chu W.}
{Reciprocal formulae for convolutions of Bernoulli and Euler polynomials}
{Rend. Math. Appl.}{{\bf 7:32} (2012), no. 1-2, 17--73}
%
\bibitem{Chu10}\bibfmta
{Chu W.}
{Elementary Proofs for Convolution Identities of Abel and Hagen–Rothe}
{The Electr. J. of Combinatorics}{{\bf 17.1} (2010), Research Paper N24}
%
\bibitem{ChuWei08}\bibfmta
{Chu W., Wei C.}
{Legendre inversions and balanced hypergeometric series identities}
{Discrete Math.}{{\bf 308} (2008), 541--549}
%
\bibitem{Gould56}\bibfmta
{Gould H.W.}
{Some generalizations of Vandermonde’s convolution}{Amer. Math. Month.}{{\bf 63:1} (1956), 84--91}
%
%
\bibitem{GoHsu73}\bibfmta
{Gould H.W., Hsu L.C.}
{Some new inverse series relations}{Duke Math. J.}{{\bf 40:4} (1973), 885--891}
%
\bibitem{GrKP94}\bibfmtb
{Graham R.L., Knuth D.E. and Patashnik O.}
{Concrete Mathematics}{second edition, Addison-Wesley}{1994}
%
%
\bibitem{Wang11}\bibfmta
{Wang X. X.}
{New Proof for Some Terminating Hypergeometric Series Identities}
{J. of Math. Research \& Exposition}{{\bf 31:1} (2011), 115--122}
%
\bibitem{WeGL11}\bibfmta
{Wei C., Gong  D., Li J.}
{Duplicate Form of Gould-Hsu Inversions and Binomial Identities}
{Inf. Computing and Appl.,}{LNCS {\bf 7030} (2011), 145--152}

\end{thebibliography}
\def\bibfmtb#1#2#3#4{ {\sc#1}, \emph{#2}, {#3}, #4.}

\vskip 1cm 
\no {\small Christian {\sc Lavault}
\newline Université Paris 13, Sorbonne Paris Cité, 
\newline Laboratoire d'Informatique de Paris-Nord F-93430 Villetaneuse.
\newline (LIPN, CNRS UMR 7030 -- \texttt{http://lipn.univ-paris13.fr})
\newline \emph{E-mail:} \url{Christian.Lavault@lipn.univ-paris13.fr}, 
\newline \emph{URL:} \url{http://lipn.univ-paris13.fr/\textasciitilde lavault}
}

\end{document}